\definecolor{DarkBlue}{rgb}{0.15,0.15,0.55}
\newcommand{\R}{\mathbb{R}}
\newcommand{\Z}{\mathbb{Z}}
\newcommand{\N}{\mathbb{N}}
\def\One{\mathbbm{1}} 
\newtheorem{proposition}{Proposition}[section]
\newtheorem{theorem}[proposition]{Theorem}
\newtheorem{lemma}[proposition]{Lemma}
\newtheorem{corollary}[proposition]{Corollary}
\theoremstyle{definition}
\newtheorem{definition}[proposition]{Definition}
\newcommand{\eps}{\varepsilon}
\newcommand{\e}{\mathrm{e}}
\newcommand{\E}{\mathbb{E}} 
\renewcommand{\P}{\mathbb{P}} 
\newcommand{\Y}{\mathcal{Y}} 
\newcommand{\rd}{\,\mathrm{d}}
\DeclareMathOperator{\ent}{H} 
\DeclareMathOperator{\KL}{KL} 
\renewcommand{\H}{\mathcal{H}} 
\DeclarePairedDelimiter\abs{\lvert}{\rvert}
\DeclarePairedDelimiter\norm{\lVert}{\rVert}
\DeclarePairedDelimiter\of{\lparen}{\rparen}
\DeclarePairedDelimiter\prn{\lparen}{\rparen}
\DeclarePairedDelimiter\bra{[}{]}
\DeclarePairedDelimiter\floor{\lfloor}{\rfloor}
\providecommand\given{}
\newcommand\SetSymbol[1][]{%
\nonscript\:#1\vert
\allowbreak
\nonscript\:
\mathopen{}}
\DeclarePairedDelimiterX\set[1]\{\}{%
\renewcommand\given{\SetSymbol[\delimsize]}
#1
}
\newtheoremstyle{remark}
{0.6em}
{0.6em}
{}
{}
{\itshape}
{.}
{.5em}
{}
\theoremstyle{remark}
\newtheorem*{remark}{Remark}
\newtheorem*{example}{Example}
\newenvironment{enum}{\begin{enumerate}

\setlength{\itemsep}{0pt}}{\end{enumerate}}
\title{Entropic Compressibility of Lévy Processes}
\author{Julien Fageot\thanks{AudioVisual Communication Lab (LCAV), École Polytechnique Fédérale de Lausanne (EPFL), Lausanne, Switzerland
	(see \url{http://bigwww.epfl.ch/fageot/index.html}).}
        \and Alireza Fallah\thanks{Laboratory for Information \& Decision Systems, Massachusetts Institute of Technology, Cambridge, MA, USA.}
		\and Thibaut Horel\footnotemark[2]}
\date{}
\begin{document}
%
%
%
%



\maketitle

\begin{abstract}
	\looseness=-1
	In contrast to their seemingly simple and shared structure of independence and stationarity, Lévy processes exhibit a wide variety of behaviors, from the self-similar Wiener process to piecewise-constant compound Poisson processes. Inspired by the recent paper of Ghourchian, Amini, and  Gohari~\cite{Ghourchian2017compressible}, we characterize their compressibility by studying the entropy of their double discretization (both in time and amplitude) in the regime of vanishing discretization steps. For a Lévy process with absolutely continuous marginals, this reduces to understanding the asymptotics of the differential entropy of its marginals at small times, {for which we obtain a new local central limit theorem}. We generalize known results for stable processes to the non-stable case, {with a special focus on Lévy processes that are locally self-similar}, and conceptualize a new compressibility hierarchy of Lévy processes, captured by their Blumenthal--Getoor index.
\end{abstract}

\textit{Keywords:} Lévy processes, discrete entropy, differential entropy, local limit theorems, Blumenthal--Getoor index.

%

\section{Introduction}\label{sec:intro}
    
    Lévy processes generalize the Wiener process by relaxing the Gaussianity of increments, while retaining their stationarity and independence. They have proven to be a fruitful and flexible stochastic continuous-domain model in many applications, including financial mathematics~\cite{schoutens2003levy}, movement patterns of animals~\cite{humphries2010environmental}, turbulence~\cite{barndorff2004levy}, and sparse signal processing~\cite{Unser2014sparse}, among others.

	In this paper, we follow an entropy-based approach initiated in~\cite{Ghourchian2017compressible} to quantify the compressibility of a Lévy process, assuming that the process is approximated by uniform sampling in time and uniform quantization in amplitude. Specifically, let $L = (L_t)_{t\geq 0}$ be a Lévy process and for $n\geq 1$, consider the vector $(L_{1/n}, L_{2/n}, \ldots , L_{(n-1)/n}, L_1) \in \R^n$ obtained by sampling $L$ over the interval $[0,1]$ with a period of $1/n$.
    Since the increments of $L$ are independent and stationary, it is more convenient to work with the vector of independent and identically distributed increments $(\Delta_1,\dots,\Delta_n)$ where $\Delta_k = L_{k/n} - L_{(k-1)/n}$ for $k = 1, \ldots , n$ (and $L_0=0$ by definition). For an integer $m\geq 1$ and $x\in\R$, denote by
    \begin{displaymath}
        [x]_m = \frac 1 m \floor{m x},
    \end{displaymath}
    the quantization of $x$ with step size $1/m$. 
    Then, we define the \emph{entropy of the Lévy process $L$} for sampling period $1/n$ and quantization step $1/m$ as
    \begin{equation} \label{eq:defineentropy}
        \mathcal{H}_{n,m}(L) = 	\ent([\Delta_1]_m , \ldots, [\Delta_n]_m)
    \end{equation}
    where $\ent(X)$ denotes the entropy of the discrete random variable $X$.
    
    The main goal of this paper is to understand the asymptotic behavior of     \eqref{eq:defineentropy} when $n,m \to \infty$. 
    Due to the characterization of entropy in terms of compressibility, 
    a slower asymptotic growth of $\mathcal{H}_{n,m}(L)$
    implies that fewer bits are required to encode an approximation of $L$ of a given quality.
    Therefore, the asymptotic behavior of $\mathcal{H}_{n,m}(L)$ is a measure of the entropic compressibility of the Lévy process $L$. 

    \subsection{Contributions: the Asymptotic Entropy of Lévy Processes}
    \label{sec:contrib}
    
    Our work is directly inspired by the contribution of Hamid Ghourchian, Arash Amini, and Amin Gohari in~\cite{Ghourchian2017compressible}, who introduced the quantity $\mathcal{H}_{n,m}(L)$---with a slightly different but equivalent perspective---and characterized its asymptotic behavior for important classes of Lévy processes, mainly the compound Poisson and stable processes. 
    
    We provide a general analysis of the entropy $\mathcal{H}_{n,m}(L)$ of a Lévy process $L$, with a particular focus on Lévy processes whose marginals are absolutely continuous, thus allowing us to reduce this question to the study of the differential entropy $h(L_t)$ at small times $t\rightarrow 0^+$. 
    This reduction mostly benefits from the seminal contributions of~\cite{Ghourchian2017compressible}, to which we bring some useful complements. 
	Our main results, detailed below, extend the ones of~\cite{Ghourchian2017compressible} to all (possibly non-stable) Lévy processes whose marginals are absolutely continuous.

    \begin{itemize}[label=\raisebox{0.30ex}{\tiny$\bullet$}]

        \item \emph{The entropy of locally self-similar Lévy processes.}
         A Lévy process is locally symmetric and self-similar if its rescaled versions converge locally to a symmetric non-trivial random process~(see Definition~\ref{def:locallystable}). The rescaling depends on the local self-similarity order of the Lévy process, that is, the self-similarity order of its local limit, which is itself related to its Blumenthal--Getoor index $\beta$ (see Section \ref{sec:BGindex}). We show in Theorem \ref{theo:maintheolocallyselfsim} that the differential entropy $h(L_t)$ of a locally symmetric and self-similar Lévy process has the same asymptotic behavior as the symmetric-$\alpha$-stable process with $\alpha = \beta$; that is,
            \begin{equation}
                h(L_t) \underset{t \rightarrow 0^+}{\sim}  \frac{1}{\beta}\log t.
            \end{equation}
        
        \item \emph{An Upper-Bound for the Entropy of Lévy Processes.}
        For the general case, we obtain an upper-bound on the entropy in terms of the Blumenthal--Getoor index $\beta$ of the Lévy process. More precisely, we show that
        \begin{equation}
            \underset{t \rightarrow 0^+}{\lim \sup}\frac{h(L_t)}{\log (1/t)}\leq -\frac{1}{\beta}. 
        \end{equation}
        A direct consequence of this result is that for Lévy processes with $\beta=0$, the entropy diverges super-logarithmically as $t\to 0^+$, making them more compressible than any locally symmetric and self-similar Lévy process. 
       \end{itemize}
 
	   These two results together reveal a new entropy-based compressibility hierarchy of Lévy processes determined by their Bluementhal--Getoor index.
     Among Lévy processes with absolutely continuous marginals, the ones with Blumenthal--Getoor index $\beta=0$ are the most compressible, and more generally the smaller $\beta$, the more compressible $L$. We moreover exemplify our main contributions on several classes of Lévy processes.

    \subsection{Related Work}

	\paragraph*{Entropy \& Compressibility.}

	The notion of entropy introduced in 1948 by Claude Shannon in two successive groundbreaking papers~\cite{shannon2001mathematical, shannon2001mathematical2} is a fundamental measure of the quantity of information of discrete random sources and exactly captures their compressibility as formalized by the \emph{source coding theorem}~\cite[Theorem 5.4.1]{Cover2012elements}. 
    Initially defined for discrete random variables, it can be generalized to continuous random variables as the differential entropy, whose relation to discrete entropy via quantization was studied by Alfréd Rényi~\cite{renyi1959dimension}. Note however that contrary to the discrete entropy, there is no universal notion for quantifying the information of continuous random sources and the differential entropy is only one of several generalizations, with its strengths and weaknesses~\cite{jaynes1957information}. 
    
	Beyond discrete random vectors, one can consider the entropy of \emph{random sequences}, defined as the averaged limit of the vector case~\cite[Section 4.2]{Cover2012elements}.
	Going one step further, several generalizations to \emph{continuous-time} random processes have been proposed, as discussed extensively in the monograph of Shunsuke lhara~\cite{Ihara93information}.
    In this paper, we follow the recent work of Ghourchian et al.~\cite{Ghourchian2017compressible}, which provides a new definition of the entropy of a random process. Their approach is based on a double discretization in time (sampling) and amplitude (quantization), corresponding to \eqref{eq:defineentropy} for Lévy processes and relying on discrete entropy. While these authors consider general Lévy processes, whose marginals are not necessarily absolutely continuous, we note that among Lévy processes with absolutely continuous marginals, they only quantify the entropy of stable processes.

   \paragraph*{Sparsity \& Lévy Processes.}
    A signal of interest is sparse if it admits a concise representation that captures most of its information. Many naturally-occurring signals are deeply structured and admit such sparse representations, calling for sparse models and sparsity-promoting methods in the analysis and synthesis of signals. 
    This led for example to the introduction of $\ell_1$ methods in statistical learning~\cite{tibshirani1996regression,hastie2015statistical} which are ubiquitous in the field of compressed sensing
   ~\cite{Donoho2006,Candes2006sparse}. 

   Continuous-domain signals present their own challenges since adequate representations should ideally capture the fine details of possibly fractal-type sample paths~\cite{Mandelbrot1982fractal}.
   It is well-known that classic Gaussian models fail at modelling sparsity~\cite{Srivastava2003advances,Mumford2010pattern,Fageot2015wavelet}, in the sense that they generate poorly compressible data. Stable models with infnite variance~\cite{Taqqu1994stable,Pesquet2002stochastic} and more generally Lévy processes and Lévy fields~\cite{Unser2014sparse} have been proposed beyond Gaussian models.
   Those are particularly interesting since they include a wide range of random processes from non-sparse models, such as the Wiener process, to very sparse models, such as compound Poisson processes, whose rate of innovation is finite~\cite{Vetterli2002FRI}. 
    There is strong empirical evidence that Lévy processes are useful in modeling sparsity in signal processing~\cite{Unser2014sparse}. To the best of our knowledge, \cite{Ghourchian2017compressible} complemented by the present paper is the first attempt to provide an information-theoretic justification of this observation.
    
    \paragraph*{Approximation-theoretic Compressibility.}
    
    We also mention a distinct area of research which aims at quantifying the sparsity of random models, with a different perspective based on the theory of approximation. 
	It was initiated with the study of independent and identically distributed random sequences~\cite{Cevher2009learning} whose compressibility is measured by the asymptotic behavior of truncated subsequences~\cite{Amini2011compressibility} and is strongly linked to the heavy-tailedness of the distribution of the sequence~\cite{Gribonval2012compressibility,Silva2012compressibility}.
    Extensions to stationary ergodic random sequences and beyond have also been proposed~\cite{Silva2015characterization,silva2022compressibility}. 
    
    A continuous-domain function is compressible if most of its information is captured by a few coefficients in an adequate dictionary. In this regard, wavelet representations are well-known for their excellent compression rate~\cite{Mallat1999}. A natural line of research, has therefore been to study the wavelet approximability  of a random model, understood as the convergence rate of its best approximation in wavelet bases.
   It was shown in a series of work~\cite{Fageot2017besov,Fageot2017multidimensional,aziznejad2018wavelet,aziznejad2020wavelet},
   that the approximation error of the best $n$-term approximation of a Lévy process in wavelet bases behaves asymptotically like $n^{- 1/\beta}$, where $\beta$ is the Blumenthal--Getoor of the process, and decays faster than any polynomial for $\beta = 0$.
   This shows a wavelet-based hierarchy of Lévy processes, from the less sparse (the Wiener process) to the sparsest (compound Poisson or Lévy process $\beta=0$)~\cite{fageot2020nterm}. Remarkably, this coincides with the information-theoretic compressibility hierarchy provided in this paper.

    \subsection{Outline}
    
	Section~\ref{sec:levynoiseentropy} introduces the family of Lévy processes and their Blumenthal--Getoor index.
    The entropy   of a Lévy process is rigorously defined in Section~\ref{sec:defineentropy}, where we also provide existence results.
    Section \ref{sec:betanonzero} contains the main results of the paper: we first characterize the asymptotic behavior of the entropy of locally symmetric and self-similar Lévy processes in Section \ref{sec:selfsimtheo}, and then deduce a general upper bound for the entropy of any Lévy process in Section \ref{sec:upperboundentropy}.
    We apply our theoretical findings to specific classes of Lévy processes by characterizing their small-time entropic behavior in Section \ref{sec:examples}. 
    Finally, a summary and discussion of our results is provided in Section~\ref{sec:discuss}.

\section{Lévy Processes and their Blumenthal--Getoor Index}
\label{sec:levynoiseentropy}

We introduce the family of Lévy processes in Section~\ref{sec:levynoiseitself} and the subfamily of symmetric stable processes in Section~\ref{sec:SalphaS}. As we shall see in \cref{sec:betanonzero,sec:examples}, the entropic compressibility of a Lévy process is captured by its Blumenthal--Getoor index, whose definition and main properties are recalled in Section~\ref{sec:BGindex}.

\subsection{Lévy Processes}
\label{sec:levynoiseitself}

Lévy processes are named after Paul Lévy, who popularized their study in 1937 \cite[Chapter VII]{levy1954theorie}\footnote{Lévy processes had been previously introduced by Bruno de Finetti~\cite{de1929sulle}. Paul Lévy generalized existing results to the class of \emph{additive} processes, that is, processes satisfying \cref{def:levy} with the exception of stationarity. We refer the interested reader to \cite{Mainardi2008origin} for a detailed historical exposition of this matter.}.  They generalize the Wiener process by relaxing the requirement that increments be Gaussian. A brief presentation is given thereafter; more details can be found in the classic monographs~\cite{Applebaum2009levy,Bertoin1998levy,Sato1994levy}.

\begin{definition}\label{def:levy}
A \emph{Lévy process} is a continuous-time random process $L = (L_t)_{t\geq 0}$ satisfying:
\begin{enumerate}
	\setlength{\itemsep}{0pt}
    \item $L_0 = 0$ almost surely (a.s.).
    \item \emph{Stationary increments:} for all $s,t \geq 0$,  $L_s$ and $L_{t+s} - L_{t}$  have the same law.
\item \emph{Independent increments:} for all $n \geq 1$ and all $0\leq t_0 \leq t_1 \leq t_2 \leq \ldots \leq t_n$, the random variables $\of[\big]{L_{t_k} - L_{t_{k-1}}}_{1\leq k\leq n}$ are mutually independent.
\item \emph{Sample paths regularity:} $t\mapsto L_t$ is a.s.\ right-continuous with left limits over $\R_{\geq 0}$.
\end{enumerate}
\end{definition}

Recall that a random variable $X$ is infinitely divisible if it can be decomposed as the sum of $n$ i.i.d.\ random variables for all $n \geq 1$~\cite{Sato1994levy}.
 Lévy processes are intimately linked to \emph{infinitely divisible random variables}~\cite[Section 7]{Sato1994levy}. In particular, the marginals $L_t$ are infinitely divisible for all $t\geq 0$.
Indeed, we can write for each $t>0$ and $n\geq 1$,
\begin{equation}\label{eq:levy-division}
    L_t = \sum_{k=1}^n \left( L_{\frac k n t} - L_{\frac {k-1} n t} \right),
\end{equation}
where $(L_{\frac{k}{n}t} - L_{\frac{k-1}{n}t})_{1\leq k\leq n}$ is i.i.d.\ since $L$ has independent and stationary increments.

Moreover, the law of a Lévy process is completely characterized by the infinitely divisible law of $L_1$, its marginal at time $t=1$.
The characteristic function $\Phi_{L_1}$ of $L_1$ can be expressed as $\Phi_{L_1}(\xi) = \exp( \Psi (\xi))$, for $\xi \in \R$, where $\Psi : \R \rightarrow  \mathbb{C}$, the characteristic exponent of $L$, is a continuous function admitting a Lévy--Khintchine representation~\cite[Theorem 8.1]{Sato1994levy} 
\begin{equation} \label{eq:LevyKhintchine}
    \Psi(\xi) = \mathrm{i} \mu \xi - \frac{\sigma^2 \xi^2}{2} + \int_{\R} \left(\mathrm{e}^{\mathrm{i} \xi t }- 1 - \mathrm{i} \xi t \One_{|t|\leq 1} \right) \nu ( \mathrm{d} t) 
\end{equation}
for any $\xi \in \R$, with $\mu \in \R$, $\sigma^2 \geq 0$, and $\nu$ is a Lévy measure. The latter means that $\nu$ is a non-negative measure on $\R$ such that $\nu(\{0\}) = 0$ and
$$\int_{|t| \leq 1} t^2  \mathrm{d}\nu (t) +  \int_{|t| > 1} \mathrm{d}\nu (t) < \infty.$$ 
We call $(\mu, \sigma^2 , \nu)$ the Lévy triplet of $L$, which uniquely determines $L$.
Then, for all $t \geq 0$, the characteristic function of the infinitely divisible random variable $L_t$ is
\begin{displaymath}
    \Phi_{L_t}(\xi) = \exp \left( t \Psi(\xi) \right), \qquad \forall \xi \in \R. 
\end{displaymath}

\subsection{Symmetric-\texorpdfstring{$\alpha$}{alpha}-Stable Lévy Processes}
\label{sec:SalphaS}

Among Lévy processes, we now define the subfamily of symmetric-$\alpha$-stable processes, which will play an important role in our study, as potential limits in law of rescaled Lévy processes (see Section \ref{sec:selfsim}). More information can be found in the classical monograph by Gennady Samorodnitsky and Murad Taqqu~\cite{Taqqu1994stable}. 

A random variable $X$ is stable if, for all $n\geq 1$, there exists $c_n>0$ and $d_n \in \R$ such that
\begin{equation} \label{eq:defstable}
    X_1 + \cdots + X_n \overset{(\mathcal{L})}{=} c_n X + d_n,
\end{equation}
where the $X_k$ are independent copies of $X$ and $\overset{(\mathcal{L})}{=}$ stands for equality in law. From the definition, we readily see that a stable random variable is infinitely divisible. The entire family of stable random variables is described by four parameters~\cite[Section 34]{gnedenko1954limit}. If $X$ is moreover symmetric (\emph{i.e.}, $X$ and $-X$ have the same law), then its characteristic function takes the form 
\begin{displaymath}
    \Phi_X(\xi) = \exp ( - \gamma \lvert \xi\rvert^\alpha), \quad \forall \xi \in \R,
\end{displaymath}
where $\gamma > 0$ and $\alpha \in  (0,2]$. In this case, Eq.~\eqref{eq:defstable} is satisfied with $c_n = n^{1/\alpha}$ and $d_n = 0$~\cite[Theorem 1, Section VI.1]{Feller2008introduction}. The parameter $\gamma$ is simply a scale parameter, while the parameter $\alpha$ deeply influences the properties of $X$. This justifies the terminology symmetric-$\alpha$-stable laws.
For instance, the $p$th moment $\mathbb{E}[ |X|^p ]$ of $X$ is finite if and only if $\alpha = 2$ (in which case $X$ is a Gaussian random variable) or $0 < p < \alpha < 2$~\cite[Proposition 1.2.16]{Taqqu1994stable}. 

For $\alpha \in (0,2]$, a symmetric-$\alpha$-stable Lévy process (or S$\alpha$S process) is a Lévy process $L$ for which $L_1$ is a S$\alpha$S random variable. The characteristic exponent of $L$ is thus given by
\begin{equation} \label{eq:psiforstable}
    \Psi(\xi) = - \gamma \lvert \xi \rvert^\alpha, \quad \forall \xi \in \R
\end{equation}
and, for each $t\geq 0$, $L_t$ is S$\alpha$S with characteristic function $\Phi_{L_t}(\xi) = \exp( - \gamma t \lvert \xi \rvert^\alpha)$. 

\subsection{The Blumenthal--Getoor Index of Lévy Processes}
\label{sec:BGindex}

    The Blumenthal--Getoor index was introduced by Robert M. Blumenthal and Ronald K. Getoor in 1961 to characterize the small-time behavior of Lévy processes~\cite{Blumenthal1961sample}. 
    Since then, it has been recognized as a key quantity to characterize the local Besov regularity~\cite{Schilling2000function,Schilling1998growth,aziznejad2018wavelet} or the variations \cite{Rosenbaum2009first} of the sample paths, the Hausdorff dimension of the image set~\cite{Jaffard1999multifractal,Durand2012multifractal}, moment estimates~\cite{Luschgy2008moment,Deng2015shift,Fageot2017multidimensional,kuhn2017levy}, the local self-similarity~\cite{fageot2019scaling}, or the local wavelet compressibility~\cite{fageot2020nterm} of Lévy processes and their generalizations. 
    We demonstrate in this paper that it also quantifies the asymptotic behavior of the entropy of Lévy processes.
    
    A Lévy process satisfies the \emph{sector condition} if there exists a constant $C>0$ such that 
    \begin{equation} \label{eq:sector}
    |\Im \Psi(\xi) |\leq C |\Re \Psi(\xi)|, \quad \forall \xi \in \R,
    \end{equation}
	with $\Psi$ its characteristic exponent, and where $\Re$ and $\Im$ respectively stand for the real part and imaginary part of complex numbers. This condition ensures that no drift is dominating the process. It typically excludes the pure drift (deterministic) process $L = (\mu t)_{t\geq 0}$ with $\mu \in \R \backslash\{0\}$, for which $\Psi(\xi) = \mathrm{i} \mu \xi$
	and is automatically satisfied by symmetric Lévy processes, {for which $L_1$ and $-L_1$ have the same law and whose characteristic exponent $\Psi$ is therefore real}. See~\cite{Bottcher2014levy} for additional discussions about the sector condition. We shall always assume that the sector condition is satisfied in this paper without further mention.
    
    \begin{definition}
 The \emph{Blumenthal--Getoor index} $\beta$ of a Lévy process $L$ with characteristic exponent $\Psi$ is defined by
    \begin{equation}
        \label{eq:BGindex}
		\beta = \inf \set*{p > 0,\;\; \frac{\abs{\Psi(\xi)}}{\abs\xi^p} \underset{\abs\xi\to\infty}{\longrightarrow} 0 }.
    \end{equation}
    \end{definition}

    The Blumenthal--Getoor index lies in $[0,2]$. This follows directly from the fact that there exists a constant $C > 0$ such that $|\Psi(\xi)|\leq C |\xi|^2$ for any $|\xi|\geq 1$~\cite[Proposition 2.4]{FageotThese}. The characteristic exponent of a S$\alpha$S process being given by \eqref{eq:psiforstable}, we deduce that its index is $\beta = \alpha$.
    The characteristic exponent of the Laplace process, for which $L_1$ is a Laplace random variable, is given by $\Psi(\xi) = - \log ( 1 + \xi^2 )$~\cite{Koltz2001laplace}. We therefore have $\beta = 0$ in this case.
    This is also the case for the gamma process (see Section~\ref{sec:gammalaplace}). 
    
\begin{remark}
	It comes as no surprise, {in light of the seminal work of Blumenthal and Getoor~\cite{Blumenthal1961sample}}, that the local behavior of a Lévy process $L$ is captured (via the index $\beta$) by the asymptotic behavior of its characteristic exponent. The latter is indeed the logarithm of the characteristic function of the law of $L_1$, whose local properties are known to be linked to asymptotic properties of its Fourier transform. 
\end{remark}

\section{Entropy of Lévy Processes}
\label{sec:defineentropy}

\subsection{Quantization and Entropy of Random Variables}

We briefly review the definitions of discrete and differential entropy. The former applies to random variables taking values in a countable space, and the latter applies to \emph{absolutely continuous} random variables, \emph{i.e.}, variables whose distribution is absolutely continuous with respect to the Lebesgue measure and thus admit a probability density function.

\begin{definition}
Let $Y$ be a discrete random variable taking values into a countable space $\Y$. The \emph{discrete entropy} of $Y$ is defined by
\begin{equation} \label{eq:entropy}
	\ent(Y) =  - \sum_{y\in\Y} \P(Y=y) \log \prn*{\P(Y=y)}
= - \E \bra*{ \log\prn[\big]{ \P(Y) } },
\end{equation}
with the usual convention $0\log (0) = 0$. Since the summand in \eqref{eq:entropy} is non-negative, $\ent(Y)$ is either $+\infty$ or a non-negative real. In the latter case, we say that $Y$ has \emph{finite} entropy.

Let $X$ be an absolutely continuous real-valued random variable with probability density function $p_X$. The \emph{differential entropy} of $X$ is
\begin{equation}\label{eq:diff-entropy}
h(X) = -\int_\R p_X(x) \log p_X(x) \rd x= -\E\bra[\big]{\log p_X(X)},
\end{equation}
which is well-defined in $[-\infty,\infty]$ provided that either the positive or negative part of the integrand in \eqref{eq:diff-entropy} is integrable. We say that the real random variable $X$ has \emph{finite differential entropy} when it is absolutely continuous and the integrand in \eqref{eq:diff-entropy} is absolutely integrable.
 \end{definition}

The quantization of order $m$ (or $m$-quantization) of a real $x \in \R$ is defined by
\begin{displaymath}
	[x]_m = \frac 1 m \floor{m x }.
\end{displaymath}  
That is, $[x]_m$ is the unique element of $\Z / m = \set{n/m\given n\in\Z}$ such that $[x]_m   \leq x < [x]_m + \frac 1 {m}$. For a real random variable $X$, the study of the entropy of its quantization $\ent([X]_m)$, and its relation to $h(X)$ (when it exists) was initiated by Alfréd Rényi in \cite{renyi1959dimension}. In particular, he established the following result (see also \cite[Corollary 1]{Ghourchian2017compressible}).

\begin{proposition}[{\cite[(11) and Thm. 1]{renyi1959dimension}}] \label{prop:renyi}
Let $X$ be a real random variable. If $\ent(\floor X)<\infty$, then $\ent([X]_m)<\infty$ for all $m\geq 1$. If furthermore $X$ has finite differential entropy, then
\begin{equation}\label{eq:renyi}
	\ent([X]_m) = \log m + h(X) + o_m(1)\,.
\end{equation}
\end{proposition}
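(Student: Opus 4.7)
The plan is to treat the two conclusions separately. For finiteness, observe that multiplication by $m$ is a bijection, so $\ent([X]_m) = \ent(\lfloor mX \rfloor)$, and that $\lfloor mX \rfloor = m \lfloor X \rfloor + R$ with $R := \lfloor m(X - \lfloor X \rfloor) \rfloor \in \{0, \dots, m-1\}$. Division with remainder by $m$ recovers the pair $(\lfloor X \rfloor, R)$ from $\lfloor mX \rfloor$, so sub-additivity of joint entropy yields $\ent([X]_m) \leq \ent(\lfloor X \rfloor) + \ent(R) \leq \ent(\lfloor X \rfloor) + \log m < \infty$, where we bounded the entropy of $R$ by the logarithm of its support size.

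For the asymptotic identity, introduce $q_k := \P([X]_m = k/m) = \int_{k/m}^{(k+1)/m} p_X(x) \rd x$ and the piecewise-constant probability density $g_m(x) := m q_{k(x)}$, where $k(x)$ is the unique integer with $x \in [k(x)/m, (k(x)+1)/m)$. A direct computation using that $g_m$ is constant on each cell yields the key reformulation $\ent([X]_m) = \log m - \int_\R p_X(x) \log g_m(x) \rd x$, so the claim reduces to $\int p_X \log g_m \to \int p_X \log p_X$ as $m \to \infty$. The lower bound $\ent([X]_m) \geq \log m + h(X)$ follows from the elementary inequality $a \log(b/a) \leq b - a$ applied pointwise with $a = p_X$ and $b = g_m$, then integrated using $\int g_m = \int p_X = 1$; this also identifies the gap as the Kullback--Leibler divergence $\KL(p_X \| g_m) \geq 0$. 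The complementary pointwise limit $g_m(x) \to p_X(x)$ at Lebesgue-almost every $x$ is supplied by the Lebesgue differentiation theorem applied to the $L^1$ function $p_X$.

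The main obstacle is justifying $\KL(p_X \| g_m) \to 0$, since the integrand $p_X \log(p_X/g_m)$ is not uniformly dominated by any integrable function, so Lebesgue's dominated convergence theorem does not apply directly. The planned workaround is a two-step approximation. First, verify the convergence for bounded and compactly supported densities, where $\log g_m$ is uniformly bounded on the support for large $m$ and dominated convergence does apply. Then extend to general $p_X$ with finite differential entropy by truncating at large and small values and showing that the excluded tail and peak contributions are uniformly small in $m$; along the dyadic subsequence $m = 2^n$, one can exploit that the densities $(g_{2^n})$ form a non-negative martingale converging to $p_X$ in $L^1$ by Scheffé's lemma, supplying the required uniform integrability and allowing one to conclude via a Vitali-type convergence argument.
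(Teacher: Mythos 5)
The paper itself does not prove this proposition: it is imported verbatim from Rényi's 1959 paper (his inequality (11) and Theorem 1), so you are attempting a self-contained proof of a genuinely non-trivial cited result. Your first part is correct: $\ent([X]_m)=\ent(\floor{mX})$, the decomposition $\floor{mX}=m\floor{X}+R$ with $R\in\{0,\dots,m-1\}$, and subadditivity give $\ent([X]_m)\le\ent(\floor{X})+\log m$, which is exactly Rényi's inequality (also reused in the proof of Proposition 3.5 of the paper). The skeleton of the second part is also sound: the identity $\ent([X]_m)=\log m-\int p_X\log g_m$, the interpretation of the gap as $\KL(p_X\|g_m)\ge 0$, and the a.e.\ convergence $g_m\to p_X$ via Lebesgue differentiation are all correct.

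The genuine gap is that $\KL(p_X\|g_m)\to 0$ — which is the entire content of Rényi's theorem — is not actually established by your plan. First, even for bounded, compactly supported densities, $g_m$ need not be bounded away from $0$ on the support of $p_X$ (think of thin spikes of $p_X$ inside cells where $p_X$ is otherwise tiny), so "$\log g_m$ uniformly bounded on the support, hence dominated convergence" fails as stated; this case is still salvageable (e.g.\ via $\int p_X\log g_m=\int g_m\log g_m$ and dominating $g_m\log g_m$), but not by the argument you give. Second, and more seriously, in the truncation step the tail contribution is not "uniformly small in $m$" by anything you say: the naive estimate produces a term of order $\log m\cdot\P(\abs{X}>K)$, which for fixed $K$ blows up with $m$. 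Taming it is precisely where the hypothesis $\ent(\floor{X})<\infty$ must be used (e.g.\ the grouping inequality applied blockwise over unit intervals bounds the tail part of $\ent([X]_m)-\log m$ by $\sum_{\abs{j}\ge K}\P(\floor{X}=j)\log\bigl(1/\P(\floor{X}=j)\bigr)$, uniformly in $m$), yet your asymptotic argument never invokes that hypothesis at all — a structural warning sign, since the statement is false without it ($h(X)<\infty$ does not imply $\ent([X]_m)<\infty$, as the paper's own remark after the proposition notes). Third, the martingale/Vitali remark only concerns $m=2^n$: the partition of mesh $1/m$ is not a refinement of the dyadic ones, and $\ent([X]_m)-\log m$ is monotone only along divisibility chains, so a dyadic-subsequence limit does not give the limit over all $m\to\infty$; moreover $L^1$-convergence of $g_{2^n}$ (already available from Scheffé) does not by itself supply the uniform integrability of $p_X\log^+(p_X/g_m)$ that the argument actually requires.
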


\begin{remark}
Finiteness of the differential entropy $h(X)$ does not necessarily imply finiteness of $\ent(\floor X)$, as exemplified by Rényi in the remark following~\cite[Theorem 1]{renyi1959dimension}. 
\end{remark}

For an absolutely continuous random variable $X$, it is known that finiteness of the log-moment $\E[\log(1+\abs X)]$ implies that $h(X)<\infty$ (see \emph{e.g.} \cite[Proposition 1]{Rioul2011information}). This is proved by a direct application of Gibbs' inequality to the Kullback--Leibler divergence $\KL(X\|Y)$ for a Cauchy variable $Y$. A simple adaption of the proof shows that this condition also implies $\ent(\floor X)<\infty$. We thus obtain the following proposition, proved in Appendix \ref{app:existenceentropy}. 

\begin{proposition}\label{prop:entropy-criterion}
Let $X$ be a real random variable such that $\E[\log(1+\abs X)]<\infty$, then $\ent(\floor X)$ is finite. If moreover $X$ is absolutely continuous with bounded probability density function (\emph{e.g.}, if $\Phi_X$ is integrable) then $X$ has finite differential entropy and \eqref{eq:renyi} holds.
\end{proposition}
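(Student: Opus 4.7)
The plan is to follow the hint given immediately before the statement: apply Gibbs' inequality (non-negativity of Kullback--Leibler divergence) against a reference distribution whose log-density behaves like $\log(1+|x|)$, so that the finiteness of the log-moment $\E[\log(1+|X|)]$ directly controls the entropy. The natural choices are a Cauchy-type probability mass function on $\Z$ for the discrete assertion, and the standard Cauchy law for the differential one.

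For the first claim, I would fix the reference probability mass function $q(n) = c/(1+n^2)$ on $\Z$, where $c>0$ is the appropriate normalization constant. Letting $Y$ be the integer-valued random variable with law $q$, non-negativity of $\KL(\floor{X}\,\|\,Y)$ yields
\begin{equation*}
\ent(\floor{X}) \leq -\E\bigl[\log q(\floor{X})\bigr] = -\log c + \E\bigl[\log(1+\floor{X}^2)\bigr].
\end{equation*}
Since $|\floor{X}| \leq |X|+1$, the elementary bound $\log(1+\floor{X}^2) \leq 2\log(1+|X|) + C$ reduces the right-hand side to a finite quantity under the hypothesis $\E[\log(1+|X|)] < \infty$.

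For the second claim, assume $X$ is absolutely continuous with density $p_X$ bounded by some constant $M$. Applying Gibbs' inequality to $\KL(X\,\|\,Y)$ with $Y$ standard Cauchy, $p_Y(x) = 1/(\pi(1+x^2))$, gives $h(X) \leq \log\pi + \E[\log(1+X^2)] \leq \log\pi + 2\,\E[\log(1+|X|)] < \infty$. For the lower bound, the boundedness of $p_X$ forces $-\log p_X(X) \geq -\log M$ almost surely, hence $h(X) \geq -\log M > -\infty$; so $h(X)$ is finite. Combined with the first part, \cref{prop:renyi} directly yields the asymptotic formula \eqref{eq:renyi}. Finally, when $\Phi_X \in L^1(\R)$, Fourier inversion shows that $p_X$ is continuous and bounded by $\|\Phi_X\|_{L^1}/(2\pi)$, recovering the parenthetical example. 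I do not anticipate any serious obstacle here: the whole argument is a near-verbatim adaptation of the classical Cauchy-reference proof that $\E[\log(1+|X|)]<\infty$ implies $h(X)<\infty$, with the only care needed being the summability of $q$ and the elementary control of $\log(1+\floor{X}^2)$ by $\log(1+|X|)$.
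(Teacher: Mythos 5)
Your proposal is correct and follows essentially the same route as the paper's proof: Gibbs' inequality against the discrete Cauchy-type reference $q(k)=c/(1+k^2)$ on $\Z$ together with the bound $\abs{\floor X}\leq \abs X + 1$ for the discrete entropy, and for the differential entropy the Cauchy-reference upper bound (the Rioul argument the paper cites) combined with the lower bound $h(X)\geq -\log M$ from the bounded density and Fourier inversion for the integrable-characteristic-function case. No gaps; the explicit appeal to Proposition~\ref{prop:renyi} at the end matches what the paper leaves implicit.
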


\subsection{Definition and Existence of the Entropy of Lévy Processes}

The notion of $m$-quantization is extended to finite-dimensional vectors by writing $[\bm{x}]_m = ( [x_1]_m, \ldots , [x_n]_m) \in (\Z/m)^n$ for $\bm{x} = (x_1, \ldots , x_n) \in \R^n$. Next, we give the definition of the entropy of a Lévy process. 

\begin{definition} \label{def:entropy} 
Let $L$ be a Lévy process and $n,m\geq 1$ be integers. First, let
\begin{displaymath} \label{eq:bmXn}
 \bm{X}_n (L) = (L_{1/n} , L_{2/n} - L_{1/n}, \ldots , L_{1} - L_{(n-1)/n})
\end{displaymath}
be the random vector of sampled values of $L$. Then, the \emph{entropy of $L$ with time-quantization $n$ and amplitude-quantization $m$} is defined as
\begin{equation} \label{eq:definitionHnm}
\H_{n,m} (L) = \ent \prn[\big]{ [ \bm{X}_n (L) ]_m }
= n \ent\prn*{ \bra*{ L_{1/n} }_m } \in [0, \infty],
\end{equation}
where the second equality is because the coordinates of $\bm X_n(L)$ are independent and distributed as $L_{1/n}$ by definition of a Lévy process.
\end{definition}

 \begin{remark}
		Although the presentation is slightly different, our definition is essentially equivalent to the one of Ghourchian et al. \cite[Eq. (6)]{Ghourchian2017compressible}. There, the authors consider the entropy of Lévy white noises, that are weak derivatives of Lévy processes~\cite{Dalang2015Levy}. Thus, the entropy of a Lévy white noise $W$ in the sense of \cite{Ghourchian2017compressible} is equal to the entropy of the corresponding Lévy process $L$ such that $L' = W$ in Definition \ref{def:entropy} up to the following minor difference. 
Ghourchian et al.\ define the quantization of a random variable $X$ as $ \frac 1 m \floor{m x + 1/2}$ instead of $ \frac 1 m \floor{m x}$~\cite[Definition 6]{Ghourchian2017compressible}. This is purely a matter of convention and will not affect the results; we prefer to follow the quantization convention of Rényi~\cite{renyi1959dimension}. 
\end{remark}

Intuitively, $\H_{n,m} (L)$ measures the expected length of the most efficient encoding of $L$ over $[0,1]$, after approximating it both temporally (via sampling) and in amplitude (via quantization). In
\cite{Ghourchian2017compressible}, the authors argue that the asymptotic behavior of $\H_{n,m}(L)$ as $n$ and $m \to \infty$ can be used as a measure of the compressibility of $L$. 

{We now provide some useful inequalities for the entropy of Lévy processes,
resulting in a characterization of Lévy processes with finite entropy. Some of
these inequalities are already known but restated here for the sake of
completeness.}
{\begin{proposition} \label{prop:anewone}
    Let $L$ be a Lévy process, then for all integers $n,m \geq 1$ we have
    \begin{align} \label{eq:renyieprovedit}
        0 \leq 
        \H_{n,1}(L)
        \leq 
        \H_{n,m}(L) 
        \leq 
        \H_{n,1}(L) + n \log m
    \end{align}
    and 
    \begin{equation} \label{eq:ghourchiandus}
		\H_{1,m}(L) - \log n 
        \leq \H_{n,m}(L) \leq 
		n \H_{1,m}(L) + n \log n .
    \end{equation}
    In particular, the three following statements are equivalent:
    \begin{itemize}
        \item $\exists n,m \geq 1, \  \H_{n,m}(L) < \infty$;
        \item $\forall n,m \geq 1, \  \H_{n,m}(L) < \infty $;
        \item $\ent(\floor{L_1}) < \infty$.
    \end{itemize}
\end{proposition}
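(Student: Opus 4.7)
The plan is to prove the two inequality chains \eqref{eq:renyieprovedit} and \eqref{eq:ghourchiandus} separately, then deduce the three-way equivalence by chaining them, exploiting the facts that $\H_{n,m}(L) = n\,\ent([L_{1/n}]_m)$ (from i.i.d.\ increments) and $\H_{1,1}(L) = \ent(\floor{L_1})$.

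For \eqref{eq:renyieprovedit}, the lower bound $0 \leq \H_{n,1}(L)$ is just non-negativity of discrete entropy. The middle inequality $\H_{n,1}(L) \leq \H_{n,m}(L)$ follows from the observation that $\floor{x} = \floor{[x]_m}$ (since $[x]_m$ lies in the same integer interval as $x$), so applying this coordinate-wise makes $[\bm X_n(L)]_1$ a deterministic function of $[\bm X_n(L)]_m$; hence the data-processing inequality for entropy applies. For the right inequality, I would write $[X]_m = [X]_1 + Z/m$ where $Z \in \{0, 1, \ldots, m-1\}$, so $\ent([X]_m) \leq \ent([X]_1) + \ent(Z) \leq \ent([X]_1) + \log m$; summing across the $n$ independent coordinates of $\bm X_n(L)$ yields the $n \log m$ term.

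For \eqref{eq:ghourchiandus}, the key structural observation is that, writing $\Delta_k := L_{k/n} - L_{(k-1)/n}$ and $S := \sum_{k=1}^{n} [\Delta_k]_m$, we have $S \leq L_1 < S + n/m$, which forces the exact decomposition $[L_1]_m = S + R/m$ with $R \in \{0, 1, \ldots, n-1\}$. The \emph{left} inequality then follows because $S$ is a function of $([\Delta_1]_m, \ldots, [\Delta_n]_m)$, so $\ent(S) \leq \H_{n,m}(L)$, and the decomposition gives $\ent([L_1]_m) \leq \ent(S) + \log n$. For the \emph{right} inequality, the $[\Delta_k]_m$ are i.i.d., so the standard bound $\ent(X_1) \leq \ent(X_1 + \cdots + X_n)$ for i.i.d.\ summands (which comes from $n\,\ent(X_1) = \ent(X_1, \ldots, X_n) = \ent(\sum X_k, X_2, \ldots, X_n) \leq \ent(\sum X_k) + (n-1)\ent(X_1)$) gives $\ent([\Delta_1]_m) \leq \ent(S)$; on the other hand, the same decomposition $S = [L_1]_m - R/m$ with $R$ taking at most $n$ values gives $\ent(S) \leq \ent([L_1]_m) + \log n$. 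Multiplying by $n$ and using $\H_{n,m}(L) = n\,\ent([L_{1/n}]_m)$ produces the desired $n\,\H_{1,m}(L) + n \log n$ bound.

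Finally, to obtain the equivalences: if $\H_{n,m}(L) < \infty$ for some $n, m \geq 1$, the left inequality of \eqref{eq:ghourchiandus} gives $\H_{1,m}(L) < \infty$, and then $\H_{1,1}(L) \leq \H_{1,m}(L)$ from \eqref{eq:renyieprovedit} gives $\ent(\floor{L_1}) < \infty$; conversely, if $\ent(\floor{L_1}) < \infty$, the right inequality of \eqref{eq:ghourchiandus} with $m = 1$ bounds $\H_{n,1}(L)$ by $n\,\ent(\floor{L_1}) + n \log n$, and then the right inequality of \eqref{eq:renyieprovedit} gives $\H_{n,m}(L) \leq \H_{n,1}(L) + n \log m < \infty$ for every $n, m$. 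The main (modest) obstacle is the right inequality in \eqref{eq:ghourchiandus}: both the i.i.d.\ lower bound $\ent([\Delta_1]_m) \leq \ent(S)$ and the structural control $S \leq L_1 < S + n/m$ must be combined carefully, as neither alone suffices.
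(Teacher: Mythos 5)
Your overall route coincides with the paper's: the chain \eqref{eq:renyieprovedit} is Rényi's quantization inequality (which the paper simply cites and you re-derive correctly, via $\floor{[x]_m}=\floor{x}$ for the monotonicity and the residual $Z\in\{0,\dots,m-1\}$ for the upper bound), and \eqref{eq:ghourchiandus} rests on exactly the paper's error variable: $[L_1]_m-\sum_{k}[\Delta_k]_m\in\{0,1/m,\dots,(n-1)/m\}$, whose entropy is at most $\log n$, combined with subadditivity. Your chaining of the two inequalities to get the three-way equivalence also matches the paper.

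The one genuine gap is your justification of $\ent([\Delta_1]_m)\le \ent(S)$ in the right inequality of \eqref{eq:ghourchiandus}. You derive it from $n\,\ent([\Delta_1]_m)=\ent\prn[\big]{S,[\Delta_2]_m,\dots,[\Delta_n]_m}\le \ent(S)+(n-1)\ent([\Delta_1]_m)$ and then cancel the term $(n-1)\ent([\Delta_1]_m)$; this cancellation is legitimate only if $\ent([\Delta_1]_m)<\infty$. But \eqref{eq:ghourchiandus} must hold with no finiteness hypothesis, and in the very implication where you invoke it ($\ent(\floor{L_1})<\infty\Rightarrow \H_{n,m}(L)<\infty$ for all $n,m$) the finiteness of $\ent([\Delta_1]_m)$ is precisely what is being proved, so assuming it there would be circular. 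The repair is the standard conditioning argument, which is what the paper uses: $\ent(S)\ge \ent\prn[\big]{S\mid [\Delta_2]_m,\dots,[\Delta_n]_m}=\ent\prn[\big]{[\Delta_1]_m\mid [\Delta_2]_m,\dots,[\Delta_n]_m}=\ent([\Delta_1]_m)$, where the middle equality holds because, given the other coordinates, $S$ and $[\Delta_1]_m$ determine each other, and the last holds by independence; this is valid even when the entropies are infinite. With that single replacement your proof is complete and is essentially the paper's.
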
}

It follows from Proposition~\ref{prop:anewone} that the entropy $\H_{n,m}(L)$ of a Lévy process $L$ is always finite or always infinite depending on the finiteness of $ \ent(\floor{L_1})$. This justifies the following definition. 

\begin{definition} \label{def:finiteentropy}
We say that a Lévy process $L$ has \emph{finite entropy} if $ \ent(\floor{L_1}) < \infty$ and \emph{infinite entropy} otherwise.
\end{definition}

\begin{proof}[Proof of Proposition~\ref{prop:anewone}]
    {Rényi proved in~\cite[Eq. (11) and (13)]{renyi1959dimension} that, for a random variable $Y$ and any integer $m \geq 1$,
    \begin{displaymath}
		0 \leq \ent([Y]_1) \leq \ent([Y]_m) \leq \ent([Y]_1) + \log m.
    \end{displaymath}
    Applying this relation to $Y=L_{1/n}$ and using \eqref{eq:definitionHnm}, we obtain \eqref{eq:renyieprovedit}.} 
    
    {The infinitely divisible random variable $X =L_1$ can be decomposed  as a sum $X = X_1 + \cdots + X_n$ of $n$ i.i.d.\ random variables whose common law is the one of $L_{1/n}$.
    Let $y_i \in \R$ for  $i= 1 \ldots n$ and set $y = \sum_{i=1}^n y_i$. Then, we have that 
        \begin{displaymath}
		\sum_{i=1}^n \floor{y_i} - 1 \leq y -1 < \floor{y} \leq y
		< \sum_{i=1}^n \floor{y_i} + n\,, 
    \end{displaymath}
    which implies that $e := \floor{y} - \sum_{i=1}^n \floor{y_i} \in \{0, \ldots, n-1\}$. Applying this relation to $y_i = m X_i$ and $y = mX = m \sum_{i=1}^n X_i$, we deduce that
    \begin{displaymath}
        E_{n,m} := [X]_m - \sum_{1=1}^n [X_i]_m \in \left\{ 0, \frac{1}{m}, \ldots, \frac{n-1}{m} \right\}. 
    \end{displaymath}
    In particular, as a discrete random variable that can take values among a set of cardinal at most $n$, we have that $\ent(E_{n,m}) \leq \log n$~\cite[Theorem 2.6.4]{Cover2012elements}.}
    
   {We then remark that
    \begin{align*}
        \mathcal{H}_{1,m}(L)
        &= 
        \ent([X]_m)
       =
        \ent \left( \sum_{i=1}^n [X_i]_m + E_{n,m} \right)
		 \leq 
		\ent \left(  [X_1]_m ,\ldots ,  [X_n]_m , E_{n,m} \right) \\
		& \leq 
		\sum_{i=1}^n  \ent \left( [X_i]_m \right) + \ent(E_{n,m})
		= n \ent([L_{1/n}]_m) + \ent(E_{n,m})
		\leq   n \ent([L_{1/n}]_m) + \log n  \\
		&= \mathcal{H}_{n,m}(L) + \log n, 
    \end{align*}
    where we used the subadditivity of the discrete entropy~\cite[2.6.6]{Cover2012elements}. This shows the leftmost inequality  in \eqref{eq:ghourchiandus}. }
    
    {For the rightmost inequality, we use the argument of ~\cite[Section IV-B, Proof of item 2)]{Ghourchian2017compressible} that
	$\ent([L_{1/n}]_1) \leq \ent([L_1]_1 ) + \log n$ and adapt it to quantification $m \geq 1$. We
	have that
	\begin{align*}
	    \frac{\mathcal{H}_{n,m}(L)}{n}  &
	    = \ent([L_{1/n}]_m) 
	    = \ent([X_1]_m)
	    = \ent\left( \sum_{i=1}^n [X_i]_m \big| [X_2]_m , \ldots , [X_n]_m \right) \\
	    &\leq 
	    \ent\left( \sum_{i=1}^n [X_i]_m \right)  = 
		\ent\left( [X]_m - E_{n,m} \right)
	    \leq 
	    \ent([X]_m , E_{n,m}) \\
	    & \leq \ent([X]_m) + \ent(E_{n,m})
	   \leq 
	    \ent([L_1]_m) + \log n  = \mathcal{H}_{1,m}(L) + \log n,
	\end{align*}
	where we used again the subadditivity of the discrete entropy.  Then, the rightmost inequality in \eqref{eq:ghourchiandus} clearly follows.}

   {Finally,  \eqref{eq:renyieprovedit} and \eqref{eq:ghourchiandus} together easily imply that, for all $n,m \geq 1$, 
    \begin{small}   \begin{equation} \label{eq:summaryHnmH1}
		   \mathcal{H}_{1,1}(L) - \log n
        \leq \H_{n,m}(L) \leq 
		n \mathcal{H}_{1,1}(L) + n \log (nm), 
    \end{equation}
    \end{small}
   from wich the equivalent statements in Proposition~\ref{prop:anewone} follow since $\ent(\floor{L_1}) = \mathcal{H}_{1,1}(L)$.}
\end{proof}

We now limit ourselves to Lévy processes whose marginals have finite differential entropy. 
A direct corollary of \cref{prop:renyi} is the following result, implicitly stated in \cite{Ghourchian2017compressible}.

\begin{corollary}[implicit in {\cite{Ghourchian2017compressible}}]
\label{prop:firstone} 
Let $L = (L_t)_{t\geq 0}$ be a Lévy process {with finite entropy (see Definition~\ref{def:finiteentropy}).} 
If $L_t$ has finite differential entropy for all $t > 0$, then
\begin{equation} \label{eq:secondtrivial}
\H_{n,m} (L) = n \prn*{ \log m + h ( L_{1/n} ) + {\eps_{n,m}} }.
\end{equation}
{where $\eps_{n,m}$ 
vanishes when $m \to \infty$ for all $n \geq 1$.
In particular,  there exists a non-decreasing function $m:\N\to\N$ such that $\lim_{n\to\infty}\eps_{n,m(n)} = 0$. }
\end{corollary}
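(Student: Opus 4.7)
The corollary is essentially a direct application of Rényi's Proposition~\ref{prop:renyi} to the marginal $L_{1/n}$ combined with the factorization
\[
	\H_{n,m}(L) = n\, \ent\bigl([L_{1/n}]_m\bigr)
\]
from Definition~\ref{def:entropy}. The plan is thus to verify the hypotheses of Proposition~\ref{prop:renyi} for $X = L_{1/n}$ for each fixed $n \geq 1$, extract the asymptotic relation as $m \to \infty$, and then perform a diagonalization to produce the subsequence $m(n)$.

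First, I would check that $\ent(\lfloor L_{1/n} \rfloor) < \infty$ for all $n\geq 1$. This is the hypothesis of Proposition~\ref{prop:renyi} needed to ensure $\ent([L_{1/n}]_m) < \infty$ and to get Rényi's asymptotic. The assumption that $L$ has finite entropy means, by Definition~\ref{def:finiteentropy}, that $\ent(\lfloor L_1 \rfloor) < \infty$. By the equivalence established in Proposition~\ref{prop:anewone}, this implies $\H_{n,m}(L) < \infty$ for all integers $n,m \geq 1$, and in particular $\ent(\lfloor L_{1/n}\rfloor) = \ent([L_{1/n}]_1) < \infty$. Since $L_{1/n}$ has finite differential entropy by assumption, Proposition~\ref{prop:renyi} applies and yields, for each fixed $n\geq 1$,
\[
	\ent\bigl([L_{1/n}]_m\bigr) = \log m + h(L_{1/n}) + \eps_{n,m}, \qquad \eps_{n,m} \xrightarrow[m\to\infty]{} 0.
\]
Multiplying both sides by $n$ gives~\eqref{eq:secondtrivial}.

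It only remains to construct the non-decreasing function $m:\N \to \N$ with $\eps_{n, m(n)} \to 0$. This is a routine diagonal extraction: for every $k \geq 1$, since $\eps_{k, m} \to 0$ as $m\to\infty$, pick $M_k \in \N$ such that $|\eps_{k, m}| \leq 1/k$ for all $m \geq M_k$. Then set
\[
	m(n) := \max\bigl(n,\, M_1,\, M_2,\, \ldots,\, M_n\bigr),
\]
which is clearly non-decreasing in $n$ and satisfies $m(n) \geq M_n$, so $|\eps_{n, m(n)}| \leq 1/n \to 0$. There is no real obstacle here: the only subtlety worth double-checking is the use of Proposition~\ref{prop:anewone} to confirm that the integrability hypothesis $\ent(\lfloor L_{1/n}\rfloor) < \infty$ propagates from $t=1$ to arbitrary rational times $t = 1/n$, which is precisely what the proposition provides via the infinite divisibility of $L_1$.
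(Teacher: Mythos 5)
Your proof is correct and follows essentially the same route as the paper: finiteness of $\ent(\floor{L_{1/n}})$ via Proposition~\ref{prop:anewone}, Rényi's Proposition~\ref{prop:renyi} applied to $L_{1/n}$ for each fixed $n$, and then a choice of a non-decreasing $m(n)$ making $\eps_{n,m(n)}$ vanish. Your explicit diagonal construction $m(n)=\max(n,M_1,\dots,M_n)$ is in fact slightly more detailed than the paper's, which merely asserts that $m(n)$ can be chosen non-decreasing.
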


\begin{proof}[Proof of Corollary~\ref{prop:firstone}]
	{According to Proposition~\ref{prop:anewone}, $\ent(\floor{L_{1/n}}) < \infty$ for all $n \geq 1$. 
	Applying \cref{prop:renyi} for $n \geq 1$ fixed, we deduce that \begin{displaymath} 
	    \frac{\mathcal{H}_{n,m}(L) }{n} =  \ent( [L_{1/n}]_m ) = \log m + h(L_{1/n}) + \eps_{n,m}
	\end{displaymath}
	where $\eps_{n,m} \rightarrow 0$ when $m\rightarrow \infty$, hence \eqref{eq:secondtrivial}  follows.
	Fix any vanishing sequence $(u_n)_{n\geq 1}$ of non-negative real numbers. Then, for any $n \geq 1$, there exists $m(n)$ such that $|\eps_{n,m(n)}| \leq u_n$. Moreover, $m(n)$ can be chosen so that $m(n)$ is non-decreasing. Then, $|\eps_{n,m(n)}| \leq u_n$ vanishes when $n \to \infty$.}
\end{proof}

{Understanding the set of functions $n\mapsto m(n)$ such that $\lim_{n\to\infty}\eps_{n,m(n)}=0$ is crucial in understanding the asymptotic behavior of $\H_{n,m}(L)$ as $n$ and $m$ grow to infinity. In particular, since $h(L_{1/n})\to-\infty$ as $n\to\infty$ (see \Cref{prop:entropy-basic-properties} below) and the entropy of $L$ is nonnegative, it follows from \eqref{eq:secondtrivial} that $\lim_{n\to\infty}\eps_{n,m(n)}=+\infty$ whenever $\log m(n)\in o\big(h(L_{1/n})\big)$, in which case the expansion \eqref{eq:secondtrivial} is non-informative. 

In contrast, when\footnote{Recall that the notation $f(n)\in\omega\big(g(n)\big)$ expresses that $f(n)=g(n)\cdot h(n)$ for some function $h$ satisfying $h(n)\to\infty$ as $n\to\infty$.} $\log m(n)\in\omega\big(h(L_{1/n})\big)$ and $\lim_{n\to\infty} \eps_{n,m(n)}=0$, then \eqref{eq:secondtrivial} provides the first two terms of the asymptotic expansion of $\H_{n,m(n)}(L)$. More generally, the smallest rate of growth of $n\mapsto m(n)$ such that $\eps_{n,m(n)}\in o\big(\log m(n) + h(L_{1/n})\big)$ quantifies how the amplitude quantization's order needs to be adjusted to increasing time quantization's orders. This “critical” rate of growth depends on $L$ and needs to be determined on a case-by-case basis. It follows from our discussion that we expect this critical rate to be at least of the order of $\exp\big(-h(L_{1/n})\big)$ and in fact, \cite[Prop.\ 1]{Ghourchian2017compressible} shows that for S$\alpha$S Lévy processes for which $h(L_{1/n})\sim - \frac 1 \alpha \log n$, we have $\lim_{n\to\infty} \eps_{n,m(n)} = 0$ whenever $m(n)\in\omega(n^{1/\alpha})$.}

\begin{remark}
{In~\cite{renyi1959dimension}, Rényi introduced the  dimension and $d$-dimension associated to a random variable $X$ as follows. Assume that there exist $d \in [0,1]$ and $h \in \R$ such that
	$\ent([X]_m) = d \log m + h + o_m(1)$. Then, $d$ is called the \emph{dimension} and $h$ the \emph{$d$-dimension} of $X$.}
	
    {Similarly for a Lévy process $L$, we denote by $d_n$ and $h_n$ the dimension and $d$-dimension of $L_{1/n}$, assuming they exist, and call $(d_n)_{n\geq 1}$ and  $(h_n)_{n \geq 1}$ the dimension sequence and the $d$-dimension sequence of $L$. Corollary~\ref{prop:firstone} then shows that the sequences $(d_n)_{n \geq 1}$ and $(h_n)_{n\geq 1}$ exist as long as $L$ has finite entropy and marginals with finite differential entropy, in which case $d_n = 1$ and $h_n = h(L_{1/n})$ for all $n\geq 1$.}
    
    {It is worth noting that it is possible to consider Lévy processes whose marginals are not absolutely continuous and to study their dimension and $d$-dimension sequences. For instance, Ghourchian et al.\ considered compound Poisson processes $L$ with rate $\lambda > 0$ and jump distribution with finite differential entropy for which the dimension sequence of $L$ is $d_n = (1 - \mathrm{e}^{-\lambda/n}) < 1$ ~\cite[Proposition 2]{Ghourchian2017compressible}. Investigating further the dimension sequences of Lévy processes whose marginals are not absolutely continuous is an interesting question for future work.}
\end{remark}

{Combining \Cref{prop:entropy-criterion} with \Cref{prop:firstone} we immediately obtain the following corollary identifying a mild condition under which $L$ has finite entropy and \eqref{eq:secondtrivial} holds. The proof is provided in \Cref{app:existenceentropy}.}

\begin{corollary} \label{prop:existenceentropy}
	Let $L$ be a Lévy process such that $\E [\log( 1 + |L_1|)] < \infty$, then $L$ has finite entropy. If moreover the characteristic function $\Phi_{L_t}$ is integrable for all $t>0$, then $L_t$ has finite differential entropy for all $t>0$ and \eqref{eq:secondtrivial} holds.
\end{corollary}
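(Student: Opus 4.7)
The plan is to derive the two claims of the corollary by applying the preceding results twice: once at $t=1$ to establish finite entropy of $L$, and once at arbitrary $t>0$ to establish finite differential entropy of $L_t$, after which \Cref{prop:firstone} immediately yields the asymptotic expansion.

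First, I would apply \Cref{prop:entropy-criterion} directly to $X=L_1$: the hypothesis $\E[\log(1+\abs{L_1})]<\infty$ yields $\ent(\floor{L_1})<\infty$, which by \Cref{def:finiteentropy} means exactly that $L$ has finite entropy.

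Second, to obtain finite differential entropy for each $L_t$, I would again apply \Cref{prop:entropy-criterion}, this time to $X=L_t$. Two ingredients need to be checked: (a) $L_t$ has a bounded probability density and (b) $\E[\log(1+\abs{L_t})]<\infty$. Ingredient (a) follows from the assumption that $\Phi_{L_t}$ is integrable, since Fourier inversion then produces a bounded (in fact continuous) density. Ingredient (b) is the real substance of the argument: the log-moment property must be transferred from the time $t=1$ covered by the hypothesis to arbitrary $t>0$. For this I would invoke the classical criterion for infinitely divisible laws (Sato~\cite[Theorem 25.3]{Sato1994levy}): for an infinitely divisible random variable $Y$ with Lévy triplet $(\mu,\sigma^2,\nu)$, $\E[\log(1+\abs{Y})]<\infty$ if and only if $\int_{\abs{x}>1}\log(1+\abs{x})\,\nu(\rd x)<\infty$. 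Since the Lévy measure of $L_t$ is simply $t\nu$ (by reading \eqref{eq:LevyKhintchine} through $\Phi_{L_t}=\exp(t\Psi)$), this integrability condition holds for $L_t$ whenever it holds for $L_1$.

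Combining (a) and (b), \Cref{prop:entropy-criterion} gives that $L_t$ has finite differential entropy for each $t>0$. All hypotheses of \Cref{prop:firstone} are then satisfied, and \eqref{eq:secondtrivial} follows at once. The main obstacle is ingredient (b); the cleanest route is the Lévy measure criterion above, but a more self-contained alternative would use the identity $L_1=L_{k/n}+(L_1-L_{k/n})$ with independent summands, combined with the sub-additivity $\log(1+a+b)\leq\log(1+a)+\log(1+b)$, to propagate the log-moment between rational multiples of $1$ and then extend by monotonicity in $t$.
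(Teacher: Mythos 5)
Your proposal is correct and follows essentially the paper's route: the paper obtains the corollary by combining \Cref{prop:entropy-criterion} (applied to the marginals $L_t$) with \Cref{prop:firstone}, exactly as you do. The one step you rightly single out as the real substance---transferring the log-moment from $t=1$ to all $t>0$---is left implicit in the paper; your appeal to Sato's $g$-moment criterion \cite[Theorem 25.3]{Sato1994levy} together with the fact that the Lévy measure of $L_t$ is $t\nu$ is the standard way to justify it (strictly speaking one applies the theorem to a submultiplicative majorant such as $\log(\e+\abs{x})$, since $\log(1+\abs{x})$ vanishes at $0$ and is not itself submultiplicative, but the equivalence with your formulation is immediate), and your self-contained alternative via the independent decomposition $L_1=L_t+(L_1-L_t)$ and subadditivity of $\log(1+\cdot)$ also works.
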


\begin{remark}
Finiteness of the log-moment $\E[\log(1 + |L_1|)]$ is a condition which also appears when studying Lévy-driven CARMA processes, for which it is equivalent to the existence of a stationary solution~\cite{Brockwell2009existence,berger2019levydriven2}. Note that this condition is in particular implied by the finiteness of the absolute moment $\E[|L_1|^p]<\infty$ for some $p>0$, which is an equivalent characterization of tempered (\emph{i.e.}, bounded by a polynomial) Lévy processes~\cite{Fageot2014,Dalang2015Levy}.
\end{remark}

It is possible to construct Lévy processes whose entropy is infinite, even when the marginals $L_t$ are absolutely continuous for $t >0$. 
This is stated in the following proposition, whose proof is provided  in Appendix \ref{app:existenceentropy}. 

\begin{proposition}\label{prop:infiniteentropy}
There exists a Lévy process $L$ whose marginals $L_t$ are absolutely continuous for any $t>0$ and such that $\ent(\floor{L_1})= \infty$. In particular, the entropy of $L$ is infinite in the sense of Definition~\ref{def:finiteentropy}. 
\end{proposition}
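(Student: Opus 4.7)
The plan is to take $L = W + J$ where $W$ is a standard Brownian motion and $J$ is an independent compound Poisson process with heavy-tailed, integer-valued jumps. The Brownian component will smooth the law of each $L_t$, $t>0$, into an absolutely continuous one, while the heavy tails of $J$ will produce infinite discrete entropy of $\floor{L_1}$.

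The concrete choice is guided by the following observation: for an integer-valued random variable $Y$ with $\P(Y=k) \sim c/(k(\log k)^2)$, one has $-\P(Y=k)\log \P(Y=k)\sim c/(k\log k)$, whose sum diverges by the integral test. I therefore fix $c>0$ such that $\mu(\{k\}) = c/(k(\log k)^2)$, $k \geq 2$, defines a probability measure on $\{2,3,\dots\}$; the normalizing series $\sum_{k\geq 2} 1/(k(\log k)^2)$ converges, again by the integral test. I then let $J$ be the unit-rate compound Poisson process with jump law $\mu$, independent of $W$, so that $L = W + J$ is a Lévy process with characteristic triplet $(0, 1, \mu)$ in the sense of \eqref{eq:LevyKhintchine}.

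Absolute continuity of each $L_t$ is then immediate: the law of $L_t = W_t + J_t$ is the convolution of the Gaussian law of $W_t$ with the law of $J_t$, so it admits a smooth density. For the entropy, I would lower bound $p_k := \P(\floor{L_1} = k)$ for integers $k \geq 2$ by restricting to the event that $J$ has exactly one jump in $[0,1]$ of size $k$ and that $W_1 \in [0,1)$. Independence gives $p_k \geq \e^{-1}\,\P(W_1 \in [0,1))\,\mu(\{k\}) \geq c_1/(k(\log k)^2)$ for some $c_1>0$. Since $\sum_k p_k = 1$ forces $p_k \to 0$, for large enough $k$ we have $p_k < 1/\e$, and monotonicity of $x \mapsto -x\log x$ on $(0, 1/\e)$ yields $-p_k \log p_k \geq c_1(\log k + 2\log\log k - \log c_1)/(k(\log k)^2)$. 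The right-hand side is equivalent to $c_1/(k\log k)$ as $k\to\infty$, so summing and invoking $\sum_k 1/(k\log k) = \infty$ gives $\ent(\floor{L_1}) = \infty$.

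I do not anticipate a significant obstacle; the argument is essentially a calibration exercise. The only delicate point is the tail of $\mu$: it must be heavy enough for the discrete entropy of a single draw to diverge, yet light enough for $\mu$ to remain summable, and the decay $1/(k(\log k)^2)$ sits right at that threshold.
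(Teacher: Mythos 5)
Your proposal is correct and takes essentially the same route as the paper's proof: both realize $L$ as an independent sum of a Gaussian part (guaranteeing absolutely continuous marginals) and a compound Poisson part whose jump law sits at the critical tail $\sim 1/(x\log^2 x)$, lower bound $\P(\floor{L_1}=k)$ via the one-jump event (hence the factor $\e^{-1}$), and conclude from the divergence of $\sum_k 1/(k\log k)$. The only cosmetic difference is that you use integer-valued jumps and bound $\P(\floor{L_1}=k)$ directly, whereas the paper uses the jump density $\alpha\One_{x\geq \e}/(x\log^2 x)$ and first derives the pointwise bound $p_{L_1}(x)\geq c/(x\log^2 x)$ before integrating over $[k,k+1)$.
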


Since all Lévy processes considered in this paper have marginals with finite differential entropy,
we will henceforth focus on characterizing the asymptotic behavior of $h(L_t)$ as $t\to 0^+$. 
By \eqref{eq:secondtrivial}, this informs the asymptotic behavior of $\H_{n,m}(L)$ (see the discussion after Corollary~\ref{prop:firstone}) and thus quantifies the compressibility of $L$. 
We start with the following observations, valid for all Lévy processes with finite differential entropy, and which will be refined in subsequent sections by considering the Blumenthal--Getoor index.

\begin{proposition}\label{prop:entropy-basic-properties}
	Let $L$ be a Lévy process such that $L_t$ has finite differential entropy for all $t>0$, then:
	\begin{enum}
	\item\label{it:entropy-increases} the function $t\mapsto h(L_t)$ is non-decreasing,
	\item\label{it:entropy-bound} $h(L_t) \leq h(L_1) + \frac{1}{2} \log \frac t {1-t}$ for $t\in (0,1)$.
	\end{enum}
\end{proposition}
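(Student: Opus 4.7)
The plan for both parts is to exploit the decomposition $L_t = L_s + (L_t - L_s)$ for $0 < s < t$, in which the two summands are independent (by independence of the increments) and the second is distributed as $L_{t-s}$ (by stationarity). Because every marginal $L_u$ ($u > 0$) has finite differential entropy by hypothesis, Shannon's entropy power inequality (EPI) applies to this sum and gives $e^{2 h(L_t)} \geq e^{2 h(L_s)} + e^{2 h(L_{t-s})}$.

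From this, part \ref{it:entropy-increases} will follow immediately: the second term on the right is nonnegative, so $e^{2 h(L_t)} \geq e^{2 h(L_s)}$, which yields the claimed monotonicity $h(L_t) \geq h(L_s)$ for $0 < s \leq t$.

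For part \ref{it:entropy-bound}, I would fix $t \in (0,1)$, set $n := \floor*{1/t}$, and use part \ref{it:entropy-increases} to write $h(L_t) \leq h(L_{1/n})$ (since $1/n \geq t$). Next, I apply EPI inductively to the representation $L_1 = \sum_{k=1}^n \Delta_k$, where $\Delta_k := L_{k/n} - L_{(k-1)/n}$ are i.i.d.\ copies of $L_{1/n}$; this yields $e^{2 h(L_1)} \geq n\, e^{2 h(L_{1/n})}$, i.e.\ $h(L_{1/n}) \leq h(L_1) - \tfrac{1}{2}\log n$. Finally, the strict estimate $n = \floor*{1/t} > 1/t - 1 = (1-t)/t$ implies $-\tfrac{1}{2}\log n < \tfrac{1}{2}\log\prn*{t/(1-t)}$, and chaining all the bounds gives $h(L_t) \leq h(L_1) + \tfrac{1}{2}\log\prn*{t/(1-t)}$.

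No genuine obstacle is anticipated: both items are routine applications of EPI. The only mildly delicate point is the choice $n = \floor*{1/t}$, which converts the slightly sharper integer-subdivision bound $-\tfrac{1}{2}\log n$ into the stated $\tfrac{1}{2}\log\prn*{t/(1-t)}$ via $n > (1-t)/t$; this choice also transparently handles $t \geq 1/2$ (where $n = 1$ and $\tfrac{1}{2}\log\prn*{t/(1-t)} \geq 0$, so the claim collapses to part \ref{it:entropy-increases}).
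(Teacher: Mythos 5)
Your proposal is correct and takes essentially the same route as the paper: the same i.i.d.\ decomposition $L_1=\sum_{k=1}^n (L_{k/n}-L_{(k-1)/n})$, the entropy-power inequality giving $2h(L_1)\geq \log n + 2h(L_{1/n})$, and the same choice $n=\floor{1/t}$ with $n>(1-t)/t$ for part (ii). The only (harmless) difference is in part (i), where the paper uses the more elementary fact $h(X)=h(X+Y\mid Y)\leq h(X+Y)$ for independent $X,Y$ with finite differential entropy, whereas you invoke the EPI—a slightly heavier tool yielding the same monotonicity.
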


\begin{proof}\begin{enum}
\item  {Let $X$ and $Y$ be independent random variables with finite differential entropy and $h(X|Y)$ be the conditional differential entropy (see~\cite[Section 8.4]{Cover2012elements}). 
Then, we recall that
\begin{equation} \label{eq:detailinequality}
    h(X) = h(X|Y) = h(X + Y | Y) \leq h(X+Y),
\end{equation}
 {where the second equality in \eqref{eq:detailinequality} is the conditional version of~\cite[Theorem 8.6.3]{Cover2012elements}}.} Since $L$ has independent increments we obtain, applying \eqref{eq:detailinequality} with $(X,Y) = (L_{t_1}, L_{t_2}-L_{t_1})$ for $0<t_1\leq t_2$, that $h(L_{t_1}) \leq h(L_{t_1} + L_{t_2} - L_{t_1})=h(L_{t_2})$.
\item For mutually independent variables $(X_1,\dots,X_n)$ with finite differential entropy, we have  by the entropy-power inequality $2h(\sum_{k=1}^n X_k)\geq \log\prn*{\sum_{k=1}^n e^{2h(X_k)}}$ (see \emph{e.g.}, \cite[Eq. (2)]{Verdu2006simple}). Applying this inequality to \eqref{eq:levy-division} with $X_k = L_{k/n} - L_{(k-1)/n}$, for which $h(X_k) = h(L_{1/n})$, we obtain for all $n\geq 1$ that
	\begin{equation}\label{eq:bound-integers}
		2 h(L_1)\geq \log n + 2h(L_{1/n})\,.
	\end{equation}
Consider $t\in(0,1)$, and define $n=\floor{1/t}$ so that $\frac{1}{n+1}< t\leq \frac{1}{n}$. Then,
\begin{displaymath}
	h(L_t) \leq h(L_{1/n}) \leq h(L_1) - \frac 1 2\log n  \leq h(L_1) + \frac 1 2\log\frac t{1-t}\,,
\end{displaymath}
where the first inequality follows from \ref{it:entropy-increases}, the second from \eqref{eq:bound-integers}, and the last is due to $n > (1-t)/t$ by definition of $n$.\qedhere
\end{enum}
\end{proof}

\begin{remark}
The asymptotic behavior of the upper bound in \ref{it:entropy-bound} is $h(L_1) + \frac{1}{2} \log t +o_t(1)$ as $t\to 0^+$. Since $h(W_t) = h(W_1) + \frac{1}{2}\log t$ for a Wiener process $W$, the bound in \ref{it:entropy-bound} is asymptotically tight for general Lévy processes. In Section \ref{sec:betanonzero}, we will obtain the tighter bound $h(L_t)\leq \frac{1}{\beta} \log t +O_t(1)$, for Lévy processes with Blumenthal--Getoor index $\beta$.
\end{remark}

\section{Entropy of Lévy Processes at Small Times}
\label{sec:betanonzero}

As per the discussion below \cref{prop:existenceentropy}, the entropic compressibility of a Lévy process $L$ with finite differential entropy is governed by the small-time asymptotics of $h(L_t)$, that we quantify in this section in terms of the Blumenthal--Getoor index $\beta$. We first consider the case of locally symmetric and self-similar Lévy processes, defined in \cref{sec:selfsim}, for which we show in \cref{sec:selfsimtheo} that $h(L_t)\sim\frac 1\beta \log t$ when $t \to 0^+$. We then consider general Lévy processes in Section~\ref{sec:upperboundentropy}, for which we obtain an upper bound on $h(L_t)$. As an important consequence, we deduce that when $\beta=0$, $h(L_t)$ diverges super-logarithmically when $t\to 0^+$.

\subsection{Locally Self-Similar Lévy Processes}
\label{sec:selfsim}

 \begin{definition} \label{def:locallystable}
  We say that a random process $X = (X_t)_{t \geq 0}$ is 
    \begin{itemize}[label=\raisebox{0.30ex}{\tiny$\bullet$}]
        \item {\emph{symmetric} if the $X$ and $-X = (-X_t)_{t \geq 0}$ have the same probability law;}
        \item \emph{self-similar of order $H \in \R$} if the rescaled random process $a^H X_{ \cdot / a} = ( a^H X_{ t / a} )_{t \geq 0}$ and $X$ have the same probability law for all $a > 0$;
        \item \emph{locally self-similar (LS) of order $H \in \R$} if 
       $a^H X_{ \cdot / a}$ converges in law to a non-trivial random process $Y$ when $a \rightarrow \infty$. If moreover the limiting process $Y$ is symmetric, we say that $X$ is \emph{locally symmetric and self-similar (LSS)}. 
 \end{itemize}
    
  \end{definition}

    Self-similar processes have been studied extensively~\cite{Sinai1976self,Embrechts2000introduction} and include the fractional Brownian motion~\cite{Mandelbrot1968} and generalizations of S$\alpha$S processes~\cite[Chapter 7]{Taqqu1994stable}. 
    Locally self-similar processes were introduced in~\cite[Definition 4.3]{fageot2019scaling}. 
    Higher values of $a>0$ in $a^H X_{\cdot / a}$ corresponds to zooming in the process at the origin. 
    The limiting process $Y$ in Definition~\ref{def:locallystable} is known to be self-similar of order $H$~\cite[Proposition 4.4]{fageot2019scaling}, which explains the terminology.
    
	The following result, proved in \cref{app:deferred-betanonzero}, characterizes symmetric self-similar and locally symmetric and self-similar random processes among the family of Lévy processes.

\begin{proposition} \label{prop:stableandlocstable}
  A Lévy process is symmetric and self-similar if and only if it is a S$\alpha$S process. In this case, the self-similarity order is $H = 1/\alpha$.
  A Lévy process with Blumenthal--Getoor index $\beta$ is LSS if and only if $\beta > 0$ and its characteristic  exponent $\Psi$ satisfies
  \begin{equation} \label{eq:psilimitforcarac}
	  \Psi(\xi) \underset{\abs \xi \to \infty}{\sim} - \gamma \lvert \xi \rvert^\beta
  \end{equation}
  for some constant $\gamma > 0$. The local self-similarity order is then $H = 1 / \beta$ and the limiting process $Y$ is S$\alpha$S with $\alpha = \beta$. 
  \end{proposition}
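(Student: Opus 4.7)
The plan is to recast everything in terms of the characteristic exponent. For a Lévy process $L$ with characteristic exponent $\Psi$ and any $a > 0$, $H \in \R$, the rescaled process $L^{(a)} := a^H L_{\cdot / a}$ is itself a Lévy process, whose characteristic exponent is
\begin{equation*}
    \Psi_a(\xi) = \tfrac{1}{a} \Psi(a^H \xi),
\end{equation*}
since $\E[\mathrm e^{\mathrm i \xi a^H L_{t/a}}] = \exp\of[\big]{\tfrac{t}{a}\Psi(a^H \xi)}$. Self-similarity then becomes the statement $\Psi_a = \Psi$ for all $a>0$, and local self-similarity becomes pointwise convergence of $\Psi_a$ to a limiting characteristic exponent $\Psi_Y$ as $a\to\infty$ (by the classical equivalence between convergence in law of Lévy processes and pointwise convergence of their characteristic exponents, \emph{e.g.} \cite[Thm.\ 8.7]{Sato1994levy}).

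\textbf{First equivalence.} Assume $L$ is symmetric and self-similar of order $H$. Symmetry forces $\Psi$ to be real and even, and self-similarity yields the functional equation $\Psi(a^H \xi) = a\, \Psi(\xi)$ for all $a>0,\ \xi \in \R$. Setting $\xi = 1$ and writing $u = a^H$ gives $\Psi(u) = -\gamma u^{1/H}$ for $u > 0$, where $\gamma := -\Psi(1) \geq 0$; evenness extends this to $\Psi(\xi) = -\gamma |\xi|^{1/H}$. Non-triviality rules out $\gamma = 0$, and for $\Psi$ to be a valid characteristic exponent we must have $1/H \in (0,2]$ (Lévy--Khintchine constraint). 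This is exactly the S$\alpha$S exponent \eqref{eq:psiforstable} with $\alpha = 1/H$; the converse is immediate from \eqref{eq:psiforstable}.

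\textbf{Second equivalence, direct direction.} Assume $L$ is LSS of order $H$, so $\Psi_a \to \Psi_Y$ pointwise, with $Y$ a non-trivial symmetric Lévy process. A short calculation shows that $Y$ must itself be self-similar of order $H$: for any $b>0$,
\begin{equation*}
    \tfrac{1}{b}\Psi_Y(b^H \xi)
    = \lim_{a \to \infty} \tfrac{1}{ab} \Psi\of[\big]{(ab)^H \xi}
    = \lim_{c \to \infty} \Psi_c(\xi) = \Psi_Y(\xi).
\end{equation*}
By the first part, $Y$ is therefore S$\alpha$S with $\alpha = 1/H$ and $\Psi_Y(\xi) = -\gamma|\xi|^\alpha$ for some $\gamma > 0$. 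Specialising $\Psi_a(\xi) \to \Psi_Y(\xi)$ at $\xi = 1$ and substituting $u = a^H \to \infty$ yields $\Psi(u)/u^{\alpha} \to -\gamma$, \emph{i.e.} $\Psi(\xi) \sim -\gamma|\xi|^{\alpha}$ as $|\xi| \to \infty$. The definition \eqref{eq:BGindex} of the Blumenthal--Getoor index then forces $\beta = \alpha > 0$, establishing \eqref{eq:psilimitforcarac} together with $H = 1/\beta$.

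\textbf{Second equivalence, converse.} Assume $\beta > 0$ and $\Psi(\xi) \sim -\gamma|\xi|^\beta$. Take $H = 1/\beta$; then for any $\xi \neq 0$, using $(a^H)^\beta = a$,
\begin{equation*}
    \Psi_a(\xi) = \tfrac{1}{a}\Psi(a^H \xi)
    = \tfrac{(a^H|\xi|)^\beta}{a} \cdot \tfrac{\Psi(a^H \xi)}{(a^H|\xi|)^\beta}
    \underset{a\to\infty}{\longrightarrow} -\gamma|\xi|^\beta,
\end{equation*}
and the limit trivially holds at $\xi=0$. Since $-\gamma|\xi|^\beta$ is a valid S$\beta$S characteristic exponent for $\beta \in (0,2]$, the continuity theorem for Lévy processes gives $L^{(a)} \to Y$ in law, with $Y$ a (symmetric) S$\beta$S process. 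This establishes the LSS property with order $H = 1/\beta$ and identifies the limit.

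\textbf{Main obstacle.} The only non-elementary input is the equivalence between convergence in law of Lévy processes and pointwise convergence of their characteristic exponents (including non-degeneracy of the limit), together with the fact that a limit in law of Lévy processes is itself Lévy; these are standard results of infinitely-divisible limit theory and can be cited from \cite{Sato1994levy}. Everything else is a manipulation of the scaling relation for $\Psi_a$ and the definition of $\beta$.
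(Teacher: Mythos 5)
Your proof is correct, but it takes a more self-contained route than the paper, which delegates most of the work to citations: the paper invokes~\cite[Proposition 13.5]{Sato1994levy} for the characterization of symmetric self-similar Lévy processes, \cite[Proposition 5.8]{fageot2019scaling} for the ``if'' direction (asymptotics of $\Psi$ implies LSS), \cite[Proposition 4.7]{fageot2019scaling} to get $\beta>0$, and \cite[Theorem 4.6]{fageot2019scaling} to identify $H=1/\beta$; the only part it proves by hand is the ``only if'' direction, where it shows (at the process level, exactly as you do at the exponent level) that the limit $Y$ is a self-similar symmetric Lévy process, hence S$\alpha$S, and then extracts $\Psi(\xi)\sim-\gamma\abs{\xi}^{1/H}$ by evaluating the convergence of characteristic functions at $\xi=\pm 1$. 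You instead derive the stable form of $\Psi$ directly from the scaling functional equation $\Psi(a^H\xi)=a\Psi(\xi)$, prove the ``if'' direction by the elementary limit computation $\tfrac1a\Psi(a^H\xi)\to-\gamma\abs\xi^\beta$ plus the continuity theorem, and read off $\beta=1/H$ straight from \eqref{eq:psilimitforcarac} and \eqref{eq:BGindex}, which nicely removes the dependence on \cite{fageot2019scaling}. What your route buys is transparency and near self-containedness; what it costs is that the burden shifts onto the black boxes you name at the end (a limit in law of Lévy processes is Lévy; convergence in law of the processes is equivalent to pointwise convergence of the distinguished logarithms, which requires non-vanishing of the limiting characteristic function and, for the process-level statement, a result of Jacod--Shiryaev type or the continuity theorem in the framework of \cite{fageot2019scaling}---your reference to \cite[Thm.\ 8.7]{Sato1994levy} is not quite the right statement), whereas the paper pins these down via its citations. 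Two cosmetic points: you should also evaluate at $\xi=-1$ to get the asymptotics as $\xi\to-\infty$ (the paper notes this explicitly), and you should observe that non-triviality of $Y$ rules out the degenerate cases $H\leq 0$ and $\gamma=0$; both are one-line additions.
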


\begin{remark}
A locally symmetric self-similar process is not necessarily symmetric itself, but its potential skewness vanishes at small times. Proposition \ref{prop:stableandlocstable} can be extended  to the non-symmetric case, but we will not cover it in this paper (see Section \ref{sec:discuss} for a short discussion regarding this point). 
\end{remark}

\begin{remark}
According to Proposition~\ref{prop:stableandlocstable}, the Blumenthal--Getoor index $\beta$ of a LSS Lévy process is necessarily positive. In fact, a Lévy process $L$ with $\beta = 0$ is such that $a^H X_{ \cdot / a}$ converges in law to $0$ for all $H \in \R$ when $a \to \infty$~\cite[Proposition 4.7]{fageot2019scaling}.
\end{remark}
     
\subsection{A Small-time Limit Theorem for the Entropy of LSS Lévy Processes}
\label{sec:selfsimtheo}

In this section, we consider the differential entropy $h(L_t)$ of an LSS Lévy process $L$, for which we obtain an exact asymptotic expansion as $t\to 0^+$ up to a vanishing term.

\begin{theorem} \label{theo:maintheolocallyselfsim}
Let $(L_t)_{t\geq 0}$ be a LSS Lévy process with Blumenthal--Getoor index $\beta\in(0,2]$ and such that $L_1$ has a finite absolute moment $\E[|L_1|^p]< \infty$ for some $p>0$.  Then, the differential entropy $h(L_t)$ is finite for all $t > 0$ and
\begin{equation}\label{eq:hL1n}
    h(L_{t} ) = \frac 1\beta \log t +  h(Y_\beta) + o_{t}(1), 
\end{equation}
where $Y_{\beta}$ is a S$\alpha$S random variable with $\alpha = \beta$ and $o_t(1) \to 0$ when $t\to 0^+$.
\end{theorem}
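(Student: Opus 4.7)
The plan is to separate the scaling contribution from the entropy. Setting $Z_t := t^{-1/\beta} L_t$ and using the identity $h(aX) = h(X) + \log a$ for $a>0$, we get
\[
    h(L_t) = \frac{1}{\beta}\log t + h(Z_t),
\]
so the theorem reduces to proving the entropic local limit theorem $h(Z_t) \to h(Y_\beta)$ as $t\to 0^+$. Convergence in distribution $Z_t \to Y_\beta$ is immediate from local self-similarity together with \Cref{prop:stableandlocstable}, but convergence of differential entropies is strictly stronger and requires quantitative control.

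The first technical step is a uniform-in-$t$ control of $\Phi_{Z_t}(\xi) = \exp\!\bigl(t\Psi(t^{-1/\beta}\xi)\bigr)$. From \eqref{eq:psilimitforcarac}, $\Re\Psi(\xi)\sim -\gamma\abs{\xi}^\beta$ as $\abs{\xi}\to\infty$, so for a suitably chosen $A>0$ one has $\Re\Psi(\xi)\leq -\tfrac{\gamma}{2}\abs{\xi}^\beta$ for $\abs{\xi}\geq A$. This yields $\abs{\Phi_{Z_t}(\xi)}\leq \exp(-\tfrac{\gamma}{2}\abs{\xi}^\beta)$ whenever $\abs{\xi}\geq A t^{1/\beta}$, while the trivial bound $\abs{\Phi_{Z_t}}\leq 1$ covers the shrinking complementary range. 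The resulting uniform integrable majorant combined with Fourier inversion and dominated convergence implies that $Z_t$ admits a continuous density $p_{Z_t}$ converging uniformly on $\R$ to the density $q_\beta$ of $Y_\beta$, and that $\sup_{t\leq t_0}\norm{p_{Z_t}}_\infty<\infty$. Separately, $\E[\abs{L_1}^p]<\infty$ implies $\E[\log(1+\abs{L_1})]<\infty$, so by \Cref{prop:existenceentropy} $h(L_t)$ is finite for every $t>0$; moreover, standard Luschgy--Pagès-type moment estimates for Lévy processes give $\E[\abs{L_t}^q]=O(t^{q/\beta})$ for $q\in(0,p\wedge\beta)$, which rescales to the uniform moment bound $\sup_{t\leq t_0}\E[\abs{Z_t}^q]<\infty$.

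The final step promotes pointwise density convergence, the uniform $L^\infty$ bound, and the uniform moment bound into convergence of differential entropies. Split
\[
    h(Z_t) = -\int_{\abs{x}\leq R} p_{Z_t}\log p_{Z_t}\, dx - \int_{\abs{x}>R} p_{Z_t}\log p_{Z_t}\, dx.
\]
The bulk integral converges to the analogous expression for $Y_\beta$ by bounded convergence, since $\abs{p\log p}$ is uniformly controlled on any bounded range by $\norm{p_{Z_t}}_\infty\leq M$. The tail integral is made uniformly small in $t$ as $R\to\infty$ by distinguishing two regions: where $p_{Z_t}\geq 1$ the contribution is at most $(\log M)\,\P(\abs{Z_t}>R)$, which tends to $0$ uniformly in $t$ by tightness; where $p_{Z_t}<1$ the contribution is handled via the elementary inequality $-x\log x\leq C_\delta\, x^{1-\delta}$ on $(0,1)$ and a Hölder estimate against $\E[(1+\abs{Z_t})^q]$, yielding $\int_{\abs{x}>R} p_{Z_t}^{1-\delta}\,dx \to 0$ uniformly in $t$ for $\delta$ small enough. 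The same control applies to $q_\beta$ since the S$\beta$S density has power-law tails with exponent $\beta+1$. Letting $R\to\infty$ gives $h(Z_t)\to h(Y_\beta)$, which combined with the scaling identity yields \eqref{eq:hL1n}.

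The main obstacle is this last step: convergence in distribution does not imply convergence of differential entropies without further control of both the high-density region and the tails. Both required ingredients — the uniform $L^\infty$ bound on $p_{Z_t}$ and the uniform moment bound on $\abs{Z_t}$ — rest on the uniform-in-$t$ characteristic function estimate, whose derivation is delicate because $L$ is not exactly self-similar: the small-$\xi$ range (where $\Psi$ may deviate from its stable asymptotic) and the large-$\xi$ range (where the stable form kicks in) must be treated separately, and these regimes themselves depend on $t$ through the rescaling $\xi\mapsto t^{-1/\beta}\xi$.
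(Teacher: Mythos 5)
Your proposal is correct and follows the same overall strategy as the paper: reduce via scaling to showing $h(t^{-1/\beta}L_t)\to h(Y_\beta)$, obtain a $t$-uniform integrable majorant of the characteristic function from $\Psi(\xi)\sim-\gamma\abs{\xi}^\beta$ (the paper routes this through the sector condition, you bound $\Re\Psi$ directly; both work and yield the same majorant), deduce uniform convergence and a uniform sup-bound of the densities by Fourier inversion and dominated convergence, and control $\E[\abs{t^{-1/\beta}L_t}^q]$ uniformly for small $q>0$. The one genuine difference is the last step: the paper converts density convergence plus the uniform $L^\infty$ and moment bounds into convergence of differential entropies by invoking \cite[Theorem 1]{Ghourchian2017existence}, whereas you reprove this implication by hand with a bulk/tail splitting, the elementary inequality $-x\log x\le C_\delta x^{1-\delta}$ on $(0,1)$, and a H\"older estimate against the uniform moments (choosing $\delta$ so that $q(1-\delta)/\delta>1$); conversely, you cite Luschgy--P\`ages-type moment estimates where the paper proves the required bound as its own \cref{lemma:controlmoments} (while noting it also follows from known results, and that the needed hypothesis $\abs{\Psi(\xi)}\le C\abs{\xi}^\beta$ is exactly what \cref{prop:stableandlocstable} supplies for LSS processes). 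Your version is thus more self-contained at the entropy-convergence step and less so at the moment-estimate step, but both swaps are sound. The only small point to tighten: finiteness of $h(L_t)$ for every fixed $t>0$ via \cref{prop:existenceentropy} also requires integrability of $\Phi_{L_t}$, which you should note follows for all $t>0$ from the same estimate $\Re\Psi(\xi)\le-\tfrac{\gamma}{2}\abs{\xi}^\beta$ for $\abs{\xi}$ large.
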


We shall use the following lemma in the proof of Theorem~\ref{theo:maintheolocallyselfsim}.

\begin{lemma} \label{lemma:controlmoments}
Let $L$ be a Lévy process with characteristic exponent $\Psi$, Blumenthal--Getoor index $\beta > 0$, and satisfying $\E [ |L_1|^p ] < \infty$ {for some $p > 0$}. We assume moreover that $\Psi$ is asymptotically dominated by $|\cdot |^\beta$, \textit{i.e.}, there exists a constant $C > 0$ such that $\lvert \Psi(\xi) \rvert \leq C |\xi|^\beta$ for all $|\xi|\geq 1$.
Then, for all $0< q<\min\set{1, p,\beta}$, there exists a constant $M_q >0$ such that 
    \begin{equation}
        \label{eq:boundMpforLt}
        \E [|t^{- 1 / \beta}L_t|^q ] \leq M_q, \qquad \forall 0 < t \leq 1.
    \end{equation}
\end{lemma}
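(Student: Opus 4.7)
The plan is to use the integral representation $|x|^q = c_q\int_\R \frac{1-\cos(\xi x)}{|\xi|^{1+q}}\,\rd\xi$, valid for $0<q<2$, which by Tonelli gives the identity $\E[|X|^q] = c_q\int_\R\frac{1-\Re \Phi_X(\xi)}{|\xi|^{1+q}}\,\rd\xi$ whenever $\E[|X|^q]<\infty$. Applied to $X = t^{-1/\beta}L_t$, whose characteristic function is $\Phi_X(\xi) = \exp(t\Psi(t^{-1/\beta}\xi))$, the problem reduces to showing that
\[ I(t) \;:=\; \int_\R \frac{1-\Re e^{t\Psi(t^{-1/\beta}\xi)}}{|\xi|^{1+q}}\,\rd\xi \]
is bounded uniformly over $t\in(0,1]$. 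Since $\E[|L_1|^p]<\infty$ translates into an integrability condition on $\nu$ independent of $t$, one has $\E[|L_t|^q]<\infty$ for all $t>0$ whenever $q<p$, making the representation applicable.

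To control the integrand, I would use the elementary inequality $|1-e^z|\leq |z|$ for $\Re z\leq 0$, giving $0 \leq 1-\Re e^{t\Psi(t^{-1/\beta}\xi)} \leq \min\bigl(2,\,t|\Psi(t^{-1/\beta}\xi)|\bigr)$. The right-hand side involves $\Psi$ in two regimes. The hypothesis $|\Psi(\eta)|\leq C_1|\eta|^\beta$ for $|\eta|\geq 1$ handles large arguments. For small arguments, I would establish the complementary estimate $|\Psi(\eta)|\leq C_2|\eta|^{\tilde p}$ on $|\eta|\leq 1$, where $\tilde p := \min(p,2)$; note that $q<\tilde p$ since $q<\min(1,p,\beta)$. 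This is obtained from the Lévy--Khintchine formula by splitting the jump integral: the $\{|x|\leq 1\}$ part yields an $\eta^2$ contribution from $\int_{|x|\leq 1} x^2\,\rd\nu(x)$, while the $\{|x|>1\}$ part, combined with $\int_{|x|>1}|x|^p\,\rd\nu(x)<\infty$ and the elementary inequality $\min(y^2,4)\leq 4^{1-p/2}|y|^p$ for $p\in[0,2]$, yields an $|\eta|^p$ contribution. The sector condition then transfers the resulting bound from $\Re\Psi$ to $\Psi$.

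With both estimates on $\Psi$, I would split $I(t)$ into three regions delimited by $|\xi|=t^{1/\beta}$ and $|\xi|=R:=(2/C_1)^{1/\beta}$. On $\{|\xi|\leq t^{1/\beta}\}$ the near-zero bound gives $t|\Psi(t^{-1/\beta}\xi)|\leq C_2 t^{1-\tilde p/\beta}|\xi|^{\tilde p}$; integrating $|\xi|^{\tilde p-1-q}$ (integrable near $0$ since $\tilde p>q$) produces a contribution of order $t^{1-q/\beta}\leq 1$ since $q<\beta$. On $\{t^{1/\beta}\leq |\xi|\leq R\}$ the bound at infinity gives $t|\Psi(t^{-1/\beta}\xi)|\leq C_1|\xi|^\beta$, and integrating $|\xi|^{\beta-1-q}$ yields a constant multiple of $R^{\beta-q}$, finite because $q<\beta$. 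On $\{|\xi|>R\}$ the trivial bound $2$ produces $4/(qR^q)$, finite because $q>0$. Summing the three contributions produces the desired uniform constant $M_q$.

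The main obstacle is the near-zero bound on $|\Psi|$: the hypothesis of the lemma controls $\Psi$ only at infinity, so without both the moment assumption $\E[|L_1|^p]<\infty$ and the sector condition (needed to transfer the bound from $\Re\Psi$ to $\Psi$), the innermost piece of $I(t)$ would diverge at $\xi=0$. The rest of the proof is a routine change of variables and careful case splitting.
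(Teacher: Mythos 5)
Your argument is correct and rests on the same backbone as the paper's proof: the fractional-moment identity $\E[|X|^q]=c_q\int_\R\frac{1-\Re\Phi_X(\xi)}{|\xi|^{1+q}}\,\mathrm{d}\xi$, a two-regime bound on $\Psi$, and the scaling $\Phi_{t^{-1/\beta}L_t}(\xi)=\exp\big(t\Psi(t^{-1/\beta}\xi)\big)$ with $t^{1-r/\beta}\le 1$ absorbing the $t$-dependence. The differences are in execution. First, you control $1-\Re\Phi_X$ by $\min\big(2,\,t|\Psi(t^{-1/\beta}\xi)|\big)$ via $|1-e^z|\le|z|$ for $\Re z\le 0$, whereas the paper uses the slightly more delicate bound $1-\Re\Phi_X\le M\big(1-e^{-|\Psi|}\big)$ (boundedness of $z\mapsto|1-e^z|/(1-e^{-|z|})$ on the left half-plane); your choice then forces a three-region split of the $\xi$-integral (at $|\xi|=t^{1/\beta}$ and at a fixed $R$), where the paper gets away with a single $t$-independent dominating integrand. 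Second, for the small-argument estimate you prove $|\Psi(\eta)|\le C_2|\eta|^{\min(p,2)}$ for $|\eta|\le 1$ directly from the Lévy--Khintchine formula, bounding only $\Re\Psi$ and invoking the standing sector condition to pass to $|\Psi|$; the paper instead cites an existing estimate giving $|\Psi(\eta)|\le C_1|\eta|^{r}$ with $r=\min(1,p,\beta)\le 1$, for which the drift (and hence the sector condition) is not an issue since the exponent is at most $1$. Either exponent works because all that is needed near zero is an exponent strictly larger than $q$, and your computation of the first region correctly shows the possibly divergent prefactor $t^{1-\tilde p/\beta}$ is compensated by the shrinking domain, yielding $t^{1-q/\beta}\le 1$. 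So your proposal is a correct, somewhat more self-contained variant of the paper's argument, at the cost of leaning explicitly on the sector condition (legitimate here, as the paper assumes it throughout) and of a slightly longer case analysis.
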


Upper bounds such as \eqref{eq:boundMpforLt} are known as moment estimates and play a crucial role when studying the local smoothness of the sample paths of Lévy processes~\cite{aziznejad2018wavelet,Fageot2017multidimensional,Bottcher2014levy}. Moment estimates and extensions thereof have been studied by several authors~\cite{Luschgy2008moment,Deng2015shift,Kuhn2017existence}.
Lemma \ref{lemma:controlmoments} can be deduced from known estimates. In particular, it is a consequence of~\cite[Theorem 2.9]{kuhn2017levy}, which considers the more general class of Lévy-type processes. For completeness, a self-contained proof in the case of Lévy processes is provided in Appendix \ref{app:deferred-betanonzero}.
Note that the hypothesis that $\Psi$ is asymptotically dominated by $\lvert \cdot \rvert^{\beta}$ is automatically satisfied for LSS processes (by Proposition \ref{prop:stableandlocstable}) but is not true for arbitrary Lévy processes. 

\begin{proof}[Proof of Theorem~\ref{theo:maintheolocallyselfsim}]
Fix a sequence $(t_n)_{n\geq 1}$ of positive reals such that $\lim_{n\to\infty} t_n=0$ and define $X_n = t_n^{-1/\beta}L_{t_n}$ for $n\geq 1$.  We will prove that $\lim_{n\to\infty} h(X_n) = h(Y_\beta)$ which implies that $h(t^{-1/\beta} L_t)\to h(Y_\beta)$ as $t\to 0^+$ since the sequence $(t_n)_{n\geq 1}$ is arbitrary. Equation~\eqref{eq:hL1n} then follows since $h(aX) = \log a + h(X)$ for $a > 0$. 

    Due to the sector condition \eqref{eq:sector}, we have that
    \begin{displaymath}
		c\cdot|\Psi(\xi)|  = c\cdot\sqrt{ \prn*{\Re \Psi(\xi)} ^2 + \prn*{\Im \Psi(\xi)} ^2 }
		\leq |\Re \Psi(\xi)| = - \Re \Psi(\xi),
    \end{displaymath}
	where we defined $c = (1+C^2)^{-1/2}$. Hence for all $n\geq 1$ and $\xi\in\R$,
	\begin{align}\label{eq:first}
	    |	\Phi_{X_n}(\xi) | 
	    &= | \Phi_{L_{t_n}}(t_n^{-1/\beta}\xi) |
    	= |\exp\of[\big]{t_n\Psi(t_n^{-1/\beta}\xi)} |
        = \exp\of[\big]{t_n \Re \Psi(t_n^{-1/\beta}\xi)}  \nonumber \\
        &\leq \exp\of[\big]{- c \cdot t_n |\Psi(t_n^{-1/\beta}\xi)|},
	\end{align}
	where the third equality uses that $|\exp z| = \exp( \Re z)$ for $z\in\mathbb{C}$.

By \cref{prop:stableandlocstable}, the characteristic exponent $\Psi$ of $L=(L_t)_{t\geq 0}$ satisfies $|\Psi(\xi)|\sim\gamma\abs\xi^\beta$ as $\abs\xi\to\infty$. In particular, there exists $B\geq 0$ such that $|\Psi(\xi)| \geq \gamma\abs\xi^\beta/2$ for $\abs\xi>B$. Let us now define the function $g:\R\to\R$ by $g(\xi) = 1$ for $\abs \xi \leq B$ and $g(\xi) = e^{-c\gamma\abs\xi^\beta/2}$ for $\abs \xi> B$. For $n$ large enough such that $t_n\leq 1$ we have, for all $\xi\in\R$,
\begin{equation} \label{eq:smallthingtoprove}
	\exp\of[\big]{-c \cdot  t_n |\Psi(t_n^{-1/\beta}\xi)|} \leq g(\xi).
\end{equation}
Indeed, \eqref{eq:smallthingtoprove} is obvious for  $\abs \xi\leq B$ since $\exp(-x)\leq 1$ for $x\geq 0$.  For $\abs\xi > B$, we also have $\abs{t_n^{- 1/\beta} \xi} > B$ (since $0 < t_n \leq 1$), hence $c \cdot t_n |\Psi(t_n^{-1/\beta}\xi)| \geq  c \cdot  t_n \gamma \abs{ t_n^{- 1/\beta} \xi }^{\beta} / 2 = c \cdot  \gamma \abs\xi ^\beta / 2 $ and \eqref{eq:smallthingtoprove} readily follows by definition of $g$.

By assumption on $L$, $X_n$ converges in distribution to $Y_\beta$ which implies the pointwise convergence $\Phi_{X_n}(\xi)\to\Phi_{Y_\beta}(\xi)$ for all $\xi\in\R$ as $n\to\infty$. Equation~\eqref{eq:first} together with \eqref{eq:smallthingtoprove} implies that $\Phi_{X_n}$ is uniformly dominated by the integrable function $g$ for $n$ large enough. Therefore, $\Phi_{X_n}\to\Phi_{Y_\beta}$ in $L_1$-norm by Lebesgue's dominated convergence theorem.

Denote by $q_\beta$ the pdf of $Y_\beta$ and by $p_n$ the pdf of $X_n$ for $n\geq 1$ (those are well-defined since $\Phi_{X_n}$ and $\Phi_{Y_\beta}$ are integrable). Writing $p_n$ and $q_\beta$ as the inverse Fourier transforms of $\Phi_{X_n}$ and $\Phi_{Y_\beta}$ and using the triangle inequality, we obtain for all $x\in\R$ and $n\geq 1$,
	\begin{displaymath}
	2\pi\abs[\big]{p_n (x)  -q_\beta(x)}
	=\abs*{\int_\R  \left( \Phi_{X_n}(\xi)e^{-ix\xi} - \Phi_{Y_\beta}(\xi)e^{-ix\xi} \right) \rd\xi}
    \leq\int_\R \abs[\big]{\Phi_{X_n}(\xi) - \Phi_{Y_\beta}(\xi)}\rd\xi\,.
	\end{displaymath}
Since $\Phi_{X_n}\to\Phi_{Y_\beta}$ in $L_1$-norm, this implies the pointwise---in fact, uniform---convergence $p_n(x)\to p(x)$ for all $x\in\R$ as $n\to\infty$ (this fact is well-known, see for instance~\cite[Corollary 1.2.4]{ushakov2011selected}). By {Scheffé's lemma~\cite[Section 5.10]{williams1991probability}}, this in turns implies the convergence $p_n\to p$ in $L_1$-norm.

Writing $p_n$ as the inverse Fourier transform of $\Phi_{X_n}$ as above, we get $2\pi\norm{p_n}_{\infty}\leq \int_\R \abs{\Phi_{X_n}}\leq \int_\R \abs g$ for $n$ large enough and similarly for $q_\beta$. Finally, according to Lemma~\ref{lemma:controlmoments}, for  $0<q<\min\set{1,\beta,p}$, there exists $M_q>0$ such that $\E[\abs{X_n}^q]\leq M_q$ for $n$ large enough such that $t_n \leq 1$. We then conclude using \cite[Theorem 1]{Ghourchian2017existence} that $\lim_{n\to\infty} h(X_n) = h(Y_\beta)$.
\end{proof}

\begin{example}
	If $L^\alpha$ is an S$\alpha$S process, it is obviously LSS and \cref{theo:maintheolocallyselfsim} then implies that
\begin{displaymath}
    h(L_t^\alpha) = \frac{1}{\alpha} \log t + h(L_1^\alpha) + o_t(1),
\end{displaymath}
where $o_t(1)\to 0$ when $t\to 0^+$. This specific case can be proved directly using the stability property as was done by Ghourchian et al. in~\cite[Proposition 1]{Ghourchian2017compressible}. \cref{theo:maintheolocallyselfsim} thus significantly generalizes this result to all LSS processes.
\end{example}

\begin{remark}[Link with local limit theorems]
	The key step in the proof of \cref{theo:maintheolocallyselfsim} is to prove the convergence of the sequence $(X_n)_{n\geq 1}$ to $Y_\beta$ not only in law but in the (stronger) sense that the sequence of probability density functions $(p_n)_{n\geq 1}$ converges uniformly to the density $p$ of $Y_\beta$. This is closely related to the literature on \emph{local limit theorems}, whose goal is to characterize the convergence of normalized sums of i.i.d.\ random variables in terms of the convergence of the pdfs, and includes notably uniform convergence~\cite{gnedenko1954limit}, convergence in total variation~\cite{prokhorov1952local}, and convergence in relative entropy, {which we further discuss in the next remark.}
    See also~\cite{bobkov2019local} for a recent unifying perspective on local limit theorems.
\end{remark}

\begin{remark}[A local entropic CLT]
	{The study of entropic central limit theorems (CLT) has been initiated by Andrew Barron for the Gaussian case~\cite{barron1986entropy} and further generalized to non-Gaussian stable limits by Sergey Bobkov, Gennadiy Chistyakov, and Friedrich Götze~\cite{bobkov2011convergence}. For instance, Barron's contribution has been to show that for a sequence $(X_n)_{n\geq 1}$ of finite-variance i.i.d.\ random variables with finite differential entropy, the normalized sum $S_n=\sum_{k=1}^n X_k/\sqrt{n}$  is such that $h(S_n) \rightarrow_{n \rightarrow \infty} h(G)$ with $G \sim \mathcal{N}(0,\sigma^2)$ where $\sigma^2$ the common variance of the $X_n$.}

   {Let $L$ be a Lévy process and let $X_k = L_k - L_{k-1}$ be the increments at integral times for $k \geq 1$. Then, we have that $L_n = \sum_{k=1}^n X_k$ is a sum of i.i.d.\ random variables and entropic CLTs readily apply to the study of the asymptotic behavior of $L_t$ for $t \rightarrow \infty$. 
   It is worth noting that sequences of i.i.d. random variables studied in the context of entropic CLTs (or “standard” CLTs~\cite[Chapter VIII]{Feller2008introduction}) are usually assumed to be in the basin of attraction of a stable law~\cite{barron1986entropy,bobkov2011convergence}. In the context of Lévy processes, this is equivalent to the fact that the Lévy process $L$ is asymptotically self-similar, as defined in~\cite[Definition 4.3]{fageot2019scaling}, or equivalently that the characteristic exponent $\Psi$ of $L$ satisfies $\Psi(\xi) \sim_{|\xi|\rightarrow 0} - \gamma |\xi|^\alpha$ for some $0 < \alpha \leq 2$ and $\gamma > 0$.} 
   
   {In contrast, Theorem~\ref{theo:maintheolocallyselfsim} characterizes the asymptotic behavior of the differential entropy of $L_t$ at small times $t \rightarrow 0^+$ and our work can thus be seen as the local counterpart of classic entropic central limit theorems. The assumption of \emph{local} self-similarity, equivalent to $\Psi(\xi) \sim_{|\xi|\rightarrow \infty}-\gamma |\xi|^\alpha$, thus plays the same role for characterizing the asymptotic behavior of $L_t$ for small $t\to 0^+$ as the self-similarity assumption for $t\to\infty$.
   Further discussion about the local versus asymptotic properties of Lévy processes and their generalizations can be found in~\cite[Chapter 7]{FageotThese}.}
\end{remark}

\subsection{An Upper Bound on the Small-Time Entropy of Lévy Processes} 
\label{sec:upperboundentropy}

The specific case of LSS Lévy processes studied in the previous section excludes both Lévy processes with $\beta > 0$ which are not LSS and Lévy processes with $\beta=0$. In this section, we consider a general Lévy process $L$ and upper bound the decay of $h(L_t)$ at small times. Specifically, the following theorem gives an upper bound on the limit superior of $h(L_t) / \log(1/t)$ when $t\to 0^+$ in terms of the Blumenthal--Getoor index $\beta \in[0, 2]$. This is achieved by ``approximating'' $L$ with an LSS process and applying \cref{theo:maintheolocallyselfsim}.

\begin{theorem} \label{theo:submodularstuff} 
	Let $L = (L_t)_{t\geq 0}$ be a Lévy process with Blumenthal--Getoor index $\beta \in [0,2]$ and such that $L_1$ has a finite absolute moment $\E[\abs{L_1}^p]< \infty$ for some $p>0$. Then,
\begin{equation} \label{eq:betterone}
	\limsup_{t\to 0^+} \frac{h(L_{t})}{\log (1/t)} \leq - \frac{1}{\beta},
\end{equation}
with the convention $1/0 = \infty$. 
In particular, for $\beta = 0$, we have that
\begin{equation} \label{eq:theoneforbetazero}
    \underset{t \rightarrow 0^+}{\lim} \frac{ h(L_{t}) }{\log (1/t)}  = - \infty.
\end{equation}
\end{theorem}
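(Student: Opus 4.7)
The strategy is to sandwich $L$ between itself and a suitable LSS perturbation whose Blumenthal--Getoor index $\beta'$ is slightly larger than $\beta$, bound $h(L_t)$ from above by the differential entropy of the perturbation via monotonicity of $h$ under independent addition, apply \cref{theo:maintheolocallyselfsim} to the perturbed process, and finally let $\beta' \downarrow \beta$. The boundary case $\beta = 2$ is handled directly by \cref{prop:entropy-basic-properties}\ref{it:entropy-bound}, and \eqref{eq:theoneforbetazero} follows from the general bound by letting $\beta' \downarrow 0$.

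More precisely, for $\beta \in [0, 2)$, fix $\beta' \in (\beta, 2]$. Let $\tilde L = (\tilde L_t)_{t \geq 0}$ be a S$\beta'$S Lévy process independent of $L$ with characteristic exponent $\tilde\Psi(\xi) = -\abs\xi^{\beta'}$, and define $\bar L := L + \tilde L$, a Lévy process with characteristic exponent $\bar\Psi = \Psi + \tilde\Psi$. Since $\beta' > \beta$, the definition of the Blumenthal--Getoor index gives $\abs{\Psi(\xi)}/\abs\xi^{\beta'} \to 0$ as $\abs\xi\to\infty$, hence $\bar\Psi(\xi) \sim -\abs\xi^{\beta'}$. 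By \cref{prop:stableandlocstable}, $\bar L$ is LSS with Blumenthal--Getoor index $\beta'$, and its sector condition inherits from $\Psi$ and the reality of $\tilde\Psi$. Moreover, for $0 < q < \min\set{p, \beta'}$, both $\E[\abs{L_1}^q]$ and $\E[\abs{\tilde L_1}^q]$ are finite (the latter by the standard moment properties of S$\alpha$S variables), so $\E[\abs{\bar L_1}^q] < \infty$. Thus $\bar L$ satisfies the hypotheses of \cref{theo:maintheolocallyselfsim}, which gives
\begin{equation*}
h(\bar L_t) = \tfrac{1}{\beta'}\log t + h(Y_{\beta'}) + o_t(1), \qquad t\to 0^+.
\end{equation*}

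Because $\tilde L_t$ has integrable characteristic function $e^{-t\abs\xi^{\beta'}}$ (hence a bounded density and finite differential entropy) and is independent of $L_t$, the same argument as in the proof of \cref{prop:entropy-basic-properties}\ref{it:entropy-increases} gives $h(L_t) \leq h(L_t + \tilde L_t) = h(\bar L_t)$. Dividing by $\log(1/t) > 0$ and taking the limsup as $t \to 0^+$ yields
\begin{equation*}
\limsup_{t \to 0^+}\frac{h(L_t)}{\log(1/t)} \leq -\frac{1}{\beta'}.
\end{equation*}
For $\beta \in (0, 2)$, letting $\beta' \downarrow \beta$ establishes \eqref{eq:betterone}; for $\beta = 2$, the bound $h(L_t)\leq h(L_1) + \tfrac{1}{2}\log\tfrac{t}{1-t}$ from \cref{prop:entropy-basic-properties}\ref{it:entropy-bound} gives $h(L_t)/\log(1/t)\leq -1/2+o_t(1)$ directly. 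For $\beta = 0$, the bound above holds for every $\beta' > 0$, and letting $\beta' \downarrow 0$ forces the limsup (and hence the limit) to equal $-\infty$, proving \eqref{eq:theoneforbetazero}.

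The principal obstacle is the construction of the approximating process $\bar L$: perturbing $L$ by an independent S$\beta'$S component with $\beta' > \beta$ is precisely what is needed to ensure both that the sum is LSS with index $\beta'$ (because the stable part asymptotically dominates the characteristic exponent of $L$ once $\beta' > \beta$) and that the smoothing of the density under independent convolution leaves $h(L_t)$ below $h(\bar L_t)$. Once $\bar L$ is in place, the remaining steps reduce to standard moment bookkeeping, a direct invocation of \cref{theo:maintheolocallyselfsim}, and passage to the infimum $\beta' \downarrow \beta$.
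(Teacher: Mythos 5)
Your proposal is correct and follows essentially the same route as the paper's proof: adding an independent S$\alpha$S process with index $\alpha=\beta'\in(\beta,2]$ so the sum is LSS, bounding $h(L_t)\leq h(L_t+\tilde L_t)$, invoking Theorem~\ref{theo:maintheolocallyselfsim}, and letting $\beta'\downarrow\beta$, with the $\beta=2$ case handled by Proposition~\ref{prop:entropy-basic-properties}. The only difference is that you spell out the moment bookkeeping that the paper leaves as "easily satisfied."
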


\begin{proof}
	The case $\beta=2$ is covered by \cref{prop:entropy-basic-properties}~\ref{it:entropy-bound} so we henceforth assume that $\beta\in[0,2)$. For $\alpha\in ( \beta,2]$, let $L^\alpha$ be a S$\alpha$S process with parameter $\alpha$, independent of $L$, and define $\widetilde L = L + L^\alpha$. We denote by $\widetilde\Psi$, $\Psi$, and $\Psi^\alpha$ the characteristic exponents of $\widetilde L$, $L$, and $L^\alpha$ respectively.  By definition, there exists $\gamma > 0$ such that $\Psi^\alpha(\xi) = -  \gamma \abs\xi^\alpha$. Moreover, since $\beta < \alpha$ and by definition of $\beta$, we have that $\abs{\Psi (\xi)}/ \abs\xi^\alpha\to 0$ when $\abs\xi\to\infty$. This implies that $\widetilde\Psi(\xi)\sim - \gamma\abs\xi^\alpha$ when $\abs\xi\to\infty$, hence $\widetilde L$ is LSS by Proposition~\ref{prop:stableandlocstable}.

For two independent absolutely continuous random variables $X,Y$ such that $(X+Y)$ has finite differential entropy, we have that\footnote{The finiteness of $h(X+Y)$ implies that $h(X)$ is well-defined in $[-\infty,\infty)$. This is therefore the case for $X=L_t$ in the proof. Note that the theorem is trivially true when $h(L_t) = -\infty$.} $h(X) = h(X+Y|Y) \leq h(X+Y)$. Hence
\begin{equation}\label{eq:entropy-monotone}
	h(L_t) \leq h(L_t + L^\alpha_t) = h(\widetilde L_t) =  - \frac 1\alpha\log(1/t) + O_t(1),
\end{equation}
where the last equality is by \cref{theo:maintheolocallyselfsim} applied to $\widetilde L$ (the assumptions of \cref{theo:maintheolocallyselfsim} are easily satisfied from the assumptions on $L$).  Dividing by $\log(1/t)$ for $t<1$ and taking the limit superior on both sides of \eqref{eq:entropy-monotone} yields
\begin{displaymath}
	\limsup_{t\to 0^+} \frac{ h(L_{t})}{\log(1/t)}
	\leq \limsup_{t\to 0^+} \left\{ -\frac 1\alpha + o_t(1)\right\}  = - \frac 1\alpha,
\end{displaymath}
which concludes the proof after letting $\alpha\to\beta$.
\end{proof}

\section{Examples and Applications}
\label{sec:examples}

We apply our previous findings to specific subfamilies of Lévy processes. We consider non-stable Lévy processes with positive Blumenthal-Getoor indices in Section~\ref{sec:layeredtempered} and gamma processes, for which $\beta=0$, in Section~\ref{sec:gammalaplace}. 

   \subsection{Layered and Tempered Stable Processes} \label{sec:layeredtempered}
    
   Layered stable processes and tempered stable processes were respectively introduced by Christian Houdré and Reiichiro Kawai~\cite{Houdre2007layered} and by Jan Rosinski~\cite{Rosinski2007tempering}. In both cases, we characterize the asymptotic behavior of the entropy, thus demonstrating the applicability of Theorem~\ref{theo:maintheolocallyselfsim} to Lévy processes that were not covered by previous results. For simplicity, we restrict ourselves to symmetric Lévy processes.
   
\subsubsection*{Layered Stable Processes} 

\begin{definition} \label{def:layered}
A symmetric Lévy process $L$ is said to be a \emph{layered stable process} with indices $(\alpha_0, \alpha_\infty) \in (0,2) \times (0, \infty)$
if it has no Gaussian part ($\sigma^2 = 0$ in \eqref{eq:LevyKhintchine}) and if its Lévy measure $\nu$ can be written $ \nu(\mathrm{d} t) = q(\abs t) \mathrm{d}t$ {for a continuous function $q : \R^+\setminus\set 0 \to \R^+$ with}
\begin{equation} \label{eq:qlayered}
    q(t) 
\underset{t \rightarrow 0}{\sim}    
  \frac{a_0}{t^{\alpha_0 + 1}} \quad \text{and} \quad   
    q(t) 
\underset{t \rightarrow \infty}{\sim}    
  \frac{a_\infty}{t^{\alpha_\infty + 1}}
\end{equation}
for come constants $a_0, a_\infty > 0$. 
\end{definition}

Definition~\ref{def:layered} is adapted from \cite[Definition 2.1]{Houdre2007layered} to the ambient dimension $d = 1$. Stable processes correspond to the specific case where $q(t) = \frac c {\abs t^{\alpha+1}}$ for some constant $c>0$ and $\alpha\in (0,2)$. As was studied in details in~\cite{Houdre2007layered}, the index $\alpha_0$ governs the short-term behavior of the Lévy process, while $\alpha_\infty$ captures long-terms structures.
One should therefore not be surprised to see in the next proposition that the entropy of a layered stable process is asymptotically determined by its index $\alpha_0$. 

\begin{proposition}
\label{prop:layered}
Let $L$ be a layered stable process with parameters $(\alpha_0, \alpha_\infty) \in (0,2) \times (0, \infty)$.
Then, $L$ is LSS with Blumenthal--Getoor index $\beta=\alpha_0$ and
\begin{equation} \label{eq:Hnmlayered}
    h(L_t) = \frac{1}{\alpha_0} \log t + h + o_t(1)
\end{equation}
for some constant $h \in \R$, where $o_t(1)$ vanishes for $t \rightarrow 0^+$.
\end{proposition}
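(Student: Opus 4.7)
The plan is to apply \cref{theo:maintheolocallyselfsim} directly. What must be verified is (a) that $L$ is LSS with Blumenthal--Getoor index $\beta = \alpha_0$ and (b) that $L_1$ admits a finite absolute moment of some positive order; the expansion \eqref{eq:Hnmlayered} then follows with $h = h(Y_{\alpha_0})$, where $Y_{\alpha_0}$ is an S$\alpha_0$S random variable. Condition (b) is straightforward: for a Lévy process, $\E[\abs{L_1}^p] < \infty$ for $p \in (0,2)$ is equivalent to $\int_{\abs t > 1}\abs t^p\,\nu(\rd t) < \infty$, which holds here for any $p \in (0,\alpha_\infty)$ thanks to the tail estimate $q(t) \sim a_\infty t^{-\alpha_\infty-1}$, an interval that is nonempty since $\alpha_\infty > 0$.

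For (a), by \cref{prop:stableandlocstable} it suffices to show $\Psi(\xi)\sim -\gamma\abs\xi^{\alpha_0}$ as $\abs\xi\to\infty$ for some $\gamma > 0$. Symmetry of $L$ together with $\sigma^2 = 0$ reduces the Lévy--Khintchine representation to
\[
\Psi(\xi) = 2\int_0^\infty (\cos(\xi t) - 1)\,q(t)\rd t.
\]
My approach is to split this integral at a small threshold $T > 0$. On $[T,\infty)$, the tail asymptotic of $q$ gives $\int_T^\infty q(t)\rd t < \infty$, so the Riemann--Lebesgue lemma yields an $O(1) = o(\abs\xi^{\alpha_0})$ contribution. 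On $(0,T]$, I would decompose $q(t) = a_0 t^{-\alpha_0-1}(1+\eps(t))$ with $\eps(t)\to 0$ as $t\to 0^+$. The change of variable $u=\xi t$ transforms the principal term into $a_0\xi^{\alpha_0}\int_0^{\xi T}(\cos u - 1)u^{-\alpha_0-1}\rd u$, which converges to $-a_0 c_{\alpha_0}\xi^{\alpha_0}$ as $\xi\to\infty$, where $c_{\alpha_0} = \int_0^\infty (1-\cos u)u^{-\alpha_0-1}\rd u > 0$ is finite for $\alpha_0\in(0,2)$.

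The main technical step is handling the residual term $a_0\int_0^T (\cos(\xi t)-1)\eps(t)t^{-\alpha_0-1}\rd t$. Using $\abs{\cos(\xi t)-1}\leq \min(\xi^2 t^2/2,\, 2)$ and the same change of variable, its absolute value can be bounded by $C\sup_{t\in(0,T]}\abs{\eps(t)}\cdot \xi^{\alpha_0}$ for some constant $C$ independent of $T$ and $\xi$. The subtle point is the order of quantifiers: we first fix $T$, take $\xi\to\infty$ to establish $\limsup_{\xi\to\infty}\abs{\Psi(\xi)\xi^{-\alpha_0} + 2 a_0 c_{\alpha_0}} \leq C' \sup_{t\in(0,T]}\abs{\eps(t)}$, and only then let $T\to 0^+$ to drive this upper bound to zero. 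This yields $\Psi(\xi)/\abs\xi^{\alpha_0}\to -2 a_0 c_{\alpha_0}$, establishing (a) with $\gamma = 2 a_0 c_{\alpha_0} > 0$, and \cref{theo:maintheolocallyselfsim} concludes the proof.
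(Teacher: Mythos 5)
Your proposal is correct and follows essentially the same route as the paper: reduce the claim to the asymptotic $\Psi(\xi)\sim-\gamma\abs\xi^{\alpha_0}$ via \cref{prop:stableandlocstable}, check the moment condition from the tail of the Lévy measure, and conclude with \cref{theo:maintheolocallyselfsim}. The only difference is technical: the paper rescales the small-jump part ($\abs t\leq 1$) by $u=\xi t$ and passes to the limit by dominated convergence with the dominating bound $q(t)\leq M/\abs t^{\alpha_0+1}$, whereas you split at a threshold $T$, isolate the error $\eps(t)$ in $q(t)=a_0 t^{-\alpha_0-1}(1+\eps(t))$, and handle it by a $\limsup$ argument followed by $T\to 0^+$ — both are valid and yield the same $\gamma$.
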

 
 \begin{proof}
  From \eqref{eq:LevyKhintchine}, we easily deduce that the characteristic exponent of a symmetric Lévy process without Gaussian part is given by 
    \begin{equation}\label{eq:caractexposimple}
    \Psi(\xi) = \int_{\R}  (\cos ( t \xi ) - 1 )  \nu( \mathrm{d} t ), \quad \forall \xi \in \R.
    \end{equation}
    We decompose $\Psi = \Psi_0 + \Psi_\infty$  with
\begin{displaymath}
    \Psi_0(\xi) = \int_{|t|\le 1}  (\cos ( t \xi ) - 1 )  \nu( \mathrm{d} t )
	\quad\text{and}\quad
    \Psi_\infty(\xi) = \int_{|t|> 1}  (\cos ( t \xi ) - 1 )  \nu( \mathrm{d} t ).
\end{displaymath}
 Then, after the change of variable $u=t\xi$ for $\xi\neq 0$, we get
    \begin{displaymath}
        \Psi_0(\xi) = - \left( \int_{\R} (1 -\cos u) \frac{q(|u| / \xi)}{\lvert \xi \rvert^{\alpha_0+1}}  \One_{\lvert u \rvert \leq \lvert \xi\rvert} \mathrm{d}u \right) \lvert \xi\rvert^{\alpha_0}
        := - \left( \int_{\R} g_\xi (u) \mathrm{d} u \right) \lvert \xi\rvert^{\alpha_0} . 
    \end{displaymath}
	Then, due to \eqref{eq:qlayered} for $t\rightarrow 0^+$, we have that $g_\xi(u) \rightarrow (1-\cos u) / |u|^{\alpha_0+1}$ for all $u \in \R$ when $\abs\xi\to \infty$. Moreover, again according to \eqref{eq:qlayered} and using the continuity of $q$ over $\mathbb{R}\setminus\{0\}$, there exists $M>0$ such that $q(t) \leq M / |t|^{\alpha_0+1}$ for all $\abs t \leq 1$. Hence, we deduce that
	\begin{displaymath}
        0\leq g_\xi(u)  \leq \One_{|u|\leq |\xi|}  \frac{M (1-\cos u)}{|u|^{\alpha_0+1}}  
        \leq \frac{M (1-\cos u)}{|u|^{\alpha_0+1}},
    \end{displaymath}
    the latter being integrable on $\R$. Lebesgue's dominated convergence theorem then ensures that $\int_{\R} g_\xi(u) \mathrm{d} u \rightarrow \gamma := \int_{\R} \frac{ (1-\cos u)}{|u|^{\alpha_0+1}} \mathrm{d}u$ when $|\xi|\rightarrow \infty$, and finally, 
    \begin{equation} \label{eq:psi0limit}
        \Psi_0(\xi) \underset{|\xi|\rightarrow \infty}{\sim} - \gamma \lvert \xi\rvert^{\alpha_0}.
    \end{equation}
    Moreover, \eqref{eq:qlayered} and the continuity of $q$ implies the existence of $M>0$ such that $q(|t|) \leq M / |t|^{\alpha_\infty + 1}$ for all $|t|>1$. This implies that
    \begin{equation}\label{eq:psiinftylimit}
		\lvert \Psi_\infty(\xi) \rvert =  \int_{|t|> 1}  \big(1 - \cos (\xi t)\big) q(t) \mathrm{d} t 
         \leq 2 M \int_{|t|> 1}  \frac{\mathrm{d} t}{|t|^{\alpha_\infty + 1}},
    \end{equation}
    which is a finite constant independent of $\xi \in \R$. 

    Combining \eqref{eq:psi0limit} and \eqref{eq:psiinftylimit}, we deduce that 
        $\Psi(\xi)\sim- \gamma \lvert \xi\rvert^{\alpha_0}$ as $\abs\xi\to\infty$
	which implies by  \cref{prop:stableandlocstable} that $L$ is LSS with Blumenthal--Getoor index $\beta = \alpha_0$.

	Finally, $\E[|L_1|^p]<\infty$ for all $p<\alpha_\infty$ by \cite[Proposition 2.1]{Houdre2007layered}. Hence $L$ satisfies all the assumptions of \cref{theo:maintheolocallyselfsim} and we conclude that \eqref{eq:Hnmlayered} holds.
    \end{proof}   

\subsubsection*{Proper Tempered Stable Processes} 

\begin{definition}
A symmetric Lévy process is said to be a \emph{proper tempered stable process} with parameters $p > 0$ and $0 < \alpha < 2$ if it has no Gaussian part and if its Lévy measure $\nu$ can be written as
\begin{equation} \label{eq:nutempered}
    \nu ( \mathrm{d} t ) = 
    q(\lvert t \rvert^p) 
    \frac{\mathrm{d} t}{\lvert t \rvert^{\alpha+1}} 
\end{equation}
for some completely monotone function\footnote{See \cite{schilling2012bernstein} for a general reference on completely monotone functions.} $q: (0,\infty) \rightarrow (0,\infty)$ such that
\begin{equation} \label{eq:tructruc}
    \int_0^1 x^{1-\alpha} q(x) \mathrm{d} x < \infty, \quad
    \int_1^\infty x^{-1-\alpha} q(x) \mathrm{d} x < \infty,\quad
    q(x) \underset{x \rightarrow \infty}{\longrightarrow} 0,  \
    \text{ and }   \ 
    q(x) \underset{x \rightarrow 0^+}{\longrightarrow} c  
\end{equation}
{for some constant $0 < c < \infty$.}
\end{definition}

For this definition, we follow the monograph~\cite{grabchak2016tempered}, in particular Definitions 3.1 and 3.2 specialized to the ambient dimension $d=1$. Stable processes correspond to the case where $q$ is identically equal to a positive constant, although this is excluded by the condition that $q$ vanishes at infinity. The effect of $q$ is to temper the asymptotic behavior of $\nu$, which impacts the heavy-tailedness of the Lévy process. We refer the interested reader to \cite[Chapter 1]{grabchak2016tempered} for practical motivations and for historical references on tempered stable processes. 

\begin{proposition}\label{prop:temperedstable}
Let $L$ be a proper tempered stable process with parameters $p > 0$ and $0 < \alpha < 2$. Then, $L$ is LSS with Blumenthal--Getoor index $\beta=\alpha$ and
\begin{equation}\label{eq:entropy-layered}
    h(L_t) = \frac{1}{\alpha} \log t + h + o_t(1)
\end{equation}
for some constant $h \in \R$, where $o_t(1)$ vanishes for $t \rightarrow 0^+$.
\end{proposition}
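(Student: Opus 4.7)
The plan is to follow the same three-step strategy as in the layered stable case: first, show that the characteristic exponent satisfies $\Psi(\xi) \sim -\gamma|\xi|^\alpha$ as $|\xi|\to\infty$ for some $\gamma>0$, which by \cref{prop:stableandlocstable} makes $L$ LSS with Blumenthal--Getoor index $\beta=\alpha$; second, verify that $L_1$ has a finite absolute moment of some positive order; and third, invoke \cref{theo:maintheolocallyselfsim} to obtain \eqref{eq:entropy-layered} with $h=h(Y_\alpha)$.

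For the first step, the symmetry of $L$ and the absence of a Gaussian component give, as in \eqref{eq:caractexposimple},
\begin{equation*}
    \Psi(\xi) = \int_\R \bigl(\cos(t\xi)-1\bigr)\, q(|t|^p)\,\frac{\rd t}{|t|^{\alpha+1}}.
\end{equation*}
The decisive ingredient, which was not available in the layered case and simplifies the analysis here, is that complete monotonicity of $q$ on $(0,\infty)$ forces $q$ to be non-increasing (since $q'\leq 0$), hence uniformly bounded by $c=\lim_{x\to 0^+}q(x)$. Performing the change of variables $u=t\xi$ for $\xi\neq 0$, I obtain
\begin{equation*}
    \Psi(\xi) = -|\xi|^\alpha \int_\R (1-\cos u)\, q\bigl((|u|/|\xi|)^p\bigr)\,\frac{\rd u}{|u|^{\alpha+1}}.
\end{equation*}
As $|\xi|\to\infty$, the integrand converges pointwise to $c(1-\cos u)/|u|^{\alpha+1}$ since $q(y)\to c$ as $y\to 0^+$, and is dominated by the integrable function $c(1-\cos u)/|u|^{\alpha+1}$ (using $(1-\cos u)\sim u^2/2$ near $0$, boundedness at infinity, and $\alpha\in(0,2)$). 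Lebesgue's dominated convergence theorem then yields $\Psi(\xi)\sim -\gamma|\xi|^\alpha$ with $\gamma=c\int_\R(1-\cos u)/|u|^{\alpha+1}\,\rd u\in(0,\infty)$, and \cref{prop:stableandlocstable} gives the LSS property with $\beta=\alpha$.

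For the second step, the same uniform bound $q\leq c$ implies $\nu(\rd t)\leq c\,\rd t/|t|^{\alpha+1}$, so that $\int_{|t|>1}|t|^\delta\,\nu(\rd t)<\infty$ for every $\delta\in(0,\alpha)$, which by the standard moment criterion for infinitely divisible distributions yields $\E[|L_1|^\delta]<\infty$ for all such $\delta$. All hypotheses of \cref{theo:maintheolocallyselfsim} are then in force, and \eqref{eq:entropy-layered} follows with the explicit constant $h=h(Y_\alpha)$. The main technical obstacle of the argument is the dominated convergence step in the asymptotic analysis of $\Psi$; it relies crucially on complete monotonicity to bound the tempering function $q$ uniformly by $c$, a feature which has no counterpart in the layered stable setting and dictates the overall structure of the proof.
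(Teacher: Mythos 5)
Your proposal is correct and follows essentially the same route as the paper: establish $\Psi(\xi)\sim-\gamma\abs\xi^{\alpha}$ by a change of variables and dominated convergence, conclude LSS with $\beta=\alpha$ via Proposition~\ref{prop:stableandlocstable}, check a finite fractional moment from the tail of $\nu$, and invoke Theorem~\ref{theo:maintheolocallyselfsim}. The only (harmless) difference is that you exploit the monotone bound $q\leq c$ from complete monotonicity to treat the integral in one dominated-convergence step, whereas the paper splits $\Psi=\Psi_0+\Psi_\infty$ as in the layered case and bounds $q$ using its continuity and its vanishing at infinity.
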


{The proof of Proposition \ref{prop:temperedstable} is very similar to the one of Proposition \ref{prop:layered} and is given in Appendix~\ref{app:deferred-examples}.}

\subsection{Gamma Processes} \label{sec:gammalaplace}

In the case where $\beta=0$, \cref{theo:submodularstuff} guarantees that $h(L_t)$ diverges to $-\infty$ super-logarithmically. The decay rate can in fact be significantly faster as illustrated in this section by the gamma process, for which we show that it is asymptotically equivalent to $-\frac{1}{\tau t}$ for some constant $\tau>0$.

The gamma distribution with scale parameter $\theta>0$ and shape parameter $\tau>0$ has characteristic function given by $\xi\mapsto 1/(1-i\theta\xi)^\tau$ and is thus infinitely divisible. It defines a Lévy process $L=(L_t)_{t\geq 0}$, called a \emph{gamma process}, whose characteristic exponent satisfies $|\Psi(\xi)| = |\tau \log(1-i\theta\xi)|\sim \tau\log|\xi|$ as $\abs\xi\to\infty$. Hence its Blumenthal--Getoor index is zero. 
	
\begin{proposition} \label{prop:gamma}
Let $L = (L_t)_{t\geq 0}$ be a gamma process with scale parameter $\theta > 0$ and shape parameter $\tau > 0$. Then we have as $t\to 0^+$,
	\begin{equation} \label{eq:hLtgamma}
		h(L_t) = -\frac 1{\tau t} -\log t + \log\frac\theta\tau -\gamma + 1 + o_t(1).
	\end{equation}
\end{proposition}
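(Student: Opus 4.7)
The plan is to leverage the fact that the marginals of the gamma process have an explicit closed-form density, reducing the problem to a careful asymptotic expansion.

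First I would identify $L_t$ as a Gamma random variable with shape $k := \tau t$ and scale $\theta$, which follows from the characteristic function $\Phi_{L_t}(\xi) = \exp(t\Psi(\xi)) = (1 - \mathrm{i}\theta\xi)^{-\tau t}$, whose inverse Fourier transform is the Gamma density $p_{L_t}(x) = \frac{x^{\tau t - 1} \mathrm{e}^{-x/\theta}}{\theta^{\tau t}\Gamma(\tau t)}\mathbf{1}_{x > 0}$. Then, using $\E[L_t] = \tau t\, \theta$ and the well-known identity $\E[\log L_t] = \psi(\tau t) + \log \theta$ (where $\psi = \Gamma'/\Gamma$ is the digamma function), a direct computation of $h(L_t) = -\E[\log p_{L_t}(L_t)]$ yields the closed-form expression
\begin{equation}\label{eq:gamma-entropy}
h(L_t) = \tau t + \log\theta + \log\Gamma(\tau t) + (1 - \tau t)\,\psi(\tau t).
\end{equation}

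The next step is to plug in the asymptotic expansions as $k \to 0^+$,
\begin{equation*}
\log\Gamma(k) = -\log k - \gamma k + O(k^2), \qquad \psi(k) = -\frac{1}{k} - \gamma + O(k),
\end{equation*}
both of which follow from the functional equations $\Gamma(k+1) = k\Gamma(k)$ and $\psi(k+1) = \psi(k) + 1/k$ combined with the Taylor expansions $\Gamma(1+k) = 1 - \gamma k + O(k^2)$ and $\psi(1+k) = -\gamma + O(k)$ around $k=1$ (recalling $\psi(1) = -\gamma$ and $\Gamma'(1) = -\gamma$). Substituting $k = \tau t$ into \eqref{eq:gamma-entropy}, the term $(1 - \tau t)\psi(\tau t)$ expands to $-\tfrac{1}{\tau t} + 1 - \gamma + \gamma\tau t + O(t)$, so that the linear-in-$t$ contributions $-\gamma\tau t$ (from $\log\Gamma$) and $+\gamma\tau t$ (from $(1-\tau t)\psi$) cancel.

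Collecting all surviving terms and using $-\log(\tau t) = -\log t - \log \tau$ to combine with $\log\theta$, one arrives at
\begin{equation*}
h(L_t) = -\frac{1}{\tau t} - \log t + \log\frac{\theta}{\tau} - \gamma + 1 + o_t(1),
\end{equation*}
which is precisely \eqref{eq:hLtgamma}. There is no real obstacle here beyond the careful bookkeeping: the main subtlety is that both $\log\Gamma(\tau t)$ and $\psi(\tau t)$ diverge as $t\to 0^+$ (contributing the $-\log t$ and $-1/(\tau t)$ leading terms, respectively), and one must track the subleading $O(1)$ constants and the $O(t)$ remainders carefully enough to verify the exact value of the additive constant $\log(\theta/\tau) - \gamma + 1$.
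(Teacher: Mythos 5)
Your proof is correct and follows essentially the same route as the paper: identify $L_t$ as gamma with shape $\tau t$ and scale $\theta$, use the closed-form entropy $h(L_t)=\tau t+\log\theta+\log\Gamma(\tau t)+(1-\tau t)\psi(\tau t)$, and expand $\log\Gamma$ and $\psi$ near $0$ via the recurrences $\Gamma(x+1)=x\Gamma(x)$ and $\psi(x+1)=\psi(x)+1/x$. The only (harmless) difference is that you derive the entropy formula directly rather than citing it, and track $O(t)$ terms slightly more finely than needed for the stated $o_t(1)$ accuracy.
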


    \begin{proof}
    For all $t>0$, $L_t$ is gamma-distributed with scale parameter $\theta$ and shape parameter $\tau t$, hence its differential entropy is given by (see \emph{e.g.}, \cite[Table 7.2]{Crooks2019field}):
	\begin{equation}\label{eq:entropy-gamma}
		h(L_t) = \tau t + \log \theta + \log \Gamma(\tau t) + (1-\tau t) \psi(\tau t),
	\end{equation}
	where $\Gamma$ and $\psi$ are the gamma and digamma functions respectively. Recall that for all $x>0$, $x\Gamma(x) = \Gamma(x+1)$ and $\psi(x) + 1/x = \psi(x+1)$. Hence as $t\to 0^+$,
	\begin{displaymath}
		\begin{split}
			\log \Gamma(\tau t) &= \log\frac{\Gamma(1+\tau t)}{\tau t} \\
								  &= -\log(\tau t) + o_t(1)
	\end{split}
	\quad\quad\text{and}\quad\quad
	\begin{split}
		\psi(\tau t) &= \psi(\tau t + 1) - \frac 1{\tau t}\\
					   &=-\frac 1{\tau t} -\gamma + o_t(1)
	\end{split}\,,
	\end{displaymath}
	where we used that $\Gamma$ and $\psi$ are continuous at $1$ with $\Gamma(1) = 1$ and $\psi(1) = -\gamma$, for $\gamma$ the Euler--Mascheroni constant. We thus obtain from \eqref{eq:entropy-gamma} the asymptotic expansion \eqref{eq:hLtgamma}.
	\end{proof}

\section{Discussion and Conclusion}
\label{sec:discuss}

\subsection{The Entropic Compressibility Hierarchy of Lévy processes}
\label{sec:hierarchy}

In this paper, we studied the entropy $\mathcal{H}_{n,m}(L)$ of a Lévy process $L$, as introduced in~\cite{Ghourchian2017compressible}. We restricted our study to Lévy processes whose marginals are absolutely continuous with finite differential entropy. In this case, we used the fact that (see Corollary~\ref{prop:firstone})  
\begin{displaymath}
    \frac{\mathcal{H}_{n,m}(L)}{n} - \log m = h(L_{1/n}) + \epsilon_{n,m}
\end{displaymath}
with $\epsilon_{n,m} \rightarrow 0$ when $m\rightarrow \infty$ for fixed $n \geq 1$ 
to focus our attention to the small-time behavior of the differential entropy $h(L_t)$, which satisfies $h(L_t) \to - \infty$ when $t \rightarrow 0^+$. The behavior of $h(L_t)$ has an interpretation in terms of compressibility: the faster it diverges, the more compressible $L$ is. 
Thus, Theorems \ref{theo:maintheolocallyselfsim} and \ref{theo:submodularstuff} reveal an entropy-based compressibility hierarchy for Lévy processes with absolutely continuous marginals, determined by the Blumenthal--Getoor index, therefore generalizing~\cite[Thm. 4]{Ghourchian2017compressible} in a substantial manner. This hierarchy is summarized in Figure~\ref{fig:hierarchy}.

\begin{figure}[t] \centering
\usetikzlibrary{arrows.meta}
	\begin{tikzpicture}[>=Stealth]
		\node[below, align=center, yshift=0.2em] at (1, 4)
		{Wiener process  \\[0.5em]
		$\displaystyle \frac 1 2\log t$};

		\node[below, align=center] at (1, 2.2)
		{Lévy process\\ with $\beta=2$\\[0.5em]
		$\displaystyle \frac 1 2\log t$};
		
		\node[below, align=center] at (5.7,4.5)
		{$S\alpha S$ process\\ with $0<\alpha< 2$\\[0.5em]
		$\displaystyle \frac 1\alpha \log t$};

		\node[below, align=center] at (9.7,4.5)
		{Lévy process\\with $\beta=0$\\[0.5em] $\ll \log t$};
		
		\node[below, align=center] at (5.7, 2.2)
		{Lévy process\\ with $0<\beta<2$\\[0.5em]
		$\displaystyle \frac 1 \beta \log t$};

		\node[above left, align=center,yshift=0.22em] at (12, 0)
			{Gamma process\\[0.5em]$\displaystyle -\frac 1 {\tau t}$};

		\draw[->] (0,0) -- node[below,align=center]{entropic compressibility}
		(12, 0) node[below left]{sparser};
\end{tikzpicture}
\caption{Entropic compressibility of Lévy processes with absolutely continuous marginals. The expressions shown are upper-bounds for general Lévy processes and equal to the dominant term of the asymptotics of $h(L_{t})$ for locally symmetric and self-similar processes. Wiener processes and S$\alpha$S stable processes were already covered in \cite{Ghourchian2017compressible}.}
\label{fig:hierarchy}
\end{figure}

\subsection{Future Directions}\label{sec:open}

We now discuss the assumptions made in the present manuscript and suggest open questions which may be investigated in future work.

\vspace{1em}\noindent\emph{Symmetry.} We only considered locally \emph{symmetric} and self-similar Lévy processes in Section~\ref{sec:betanonzero}. This choice was mostly made for convenience and significantly simplifies the exposition. For instance, symmetric stable processes are parameterized by two parameters, their characteristic exponent being of the form $\xi \mapsto  - \gamma \lvert \xi \rvert^\alpha$. In contrast, the characterization of all stable laws requires two additional parameters, including a skewness parameter, and is thus more involved (see \cite[Definition 1.1.6]{Taqqu1994stable}). 
It should be possible to generalize Theorem~\ref{theo:maintheolocallyselfsim} to locally self-similar processes with possibly non-symmetric local limit, but this requires extending the currently used results from \cite{fageot2019scaling} to non-symmetric limits.
Note however that the asymptotic entropy of any (possibly non-symmetric) stable random process was obtained in~\cite{Ghourchian2017compressible}, and that Theorem \ref{theo:submodularstuff} is valid for arbitrary Lévy processes.
    
\vspace{1em}\noindent\emph{Local Self-similarity.} For $\beta > 0$, we only quantified the small-time entropy of Lévy processes that are locally self-similar. It is however possible to consider Lévy processes that are not, and thus whose characteristic exponent violates \eqref{eq:psilimitforcarac} by Proposition~\ref{prop:stableandlocstable}. Examples of such processes have been constructed by Walter Farkas, Niels Jacob, and René Schilling in \cite[Examples 1.1.14 and 1.1.15]{Farkas2001function}. It would be interesting to quantify the small-time evolution of $h(L_t)$ for such Lévy processes.

    We remark however that Theorem~\ref{theo:submodularstuff} applies to Lévy processes that are not self-similar, and implies in particular that a Lévy process with absolutely continuous marginals and Blumenthal--Getoor index $\beta \in (0,2)$ is at least as compressible as S$\alpha$S processes with $\alpha \geq \beta$.
    
\vspace{1em}\noindent\emph{The case $\beta = 0$.} We obtained an exact asymptotic expansion for the entropy of gamma processes in Propositions \ref{prop:gamma}.
Beyond this specific example, our current hierarchy does not distinguish between Lévy processes with $\beta=0$ and simply states that the entropy $h(L_{t})$ diverges super-logarithmically when $t\to 0^+$. A possible research direction would be to refine the compressibility hierarchy for Lévy processes with $\beta = 0$.

\section*{Acknowledgments}
The authors are grateful to Shayan Aziznejad and Arash Amini for fruitful discussions in the early days of this project and to Iosif Pinelis for the construction of an infinitely divisible random variable with infinite differential entropy (cf.\ Proposition \ref{prop:infiniteentropy}).
Julien Fageot was supported by the Swiss National Science Foundation (SNSF) under Grant \texttt{P2ELP2\_181759}. Alireza was partially supported by the MathWorks Engineering Fellowship.

\bibliographystyle{plain}
\bibliography{references}

\newpage
\appendix

\section{Deferred Proofs from Section~\ref{sec:defineentropy}}
\label{app:existenceentropy}

\begin{proof}[Proof of \cref{prop:entropy-criterion}]
As already discussed above \cref{prop:entropy-criterion}, for an absolutely continuous random variable $X$, the condition $\E[\log(1+\abs X)]<\infty$ implies that $h(X)<\infty$. This is a direct application of Gibbs' inequality applied to $\KL(X\|Y)$ where $Y$ has a Cauchy distribution, as detailed in \cite[Proposition 1]{Rioul2011information}. Furthermore, if the probability density function $p_X$ of $X$ is bounded by some constant $a$, then $h(X)>\log 1/a$. Indeed, in this case
\begin{displaymath}
	-h(X) = \int_\R p_X(x)\log p_X(x)\rd x \leq \int_\R p_X(x)\log a\rd x = \log a\,.
\end{displaymath}
Finally, by Fourier inversion \cite[XV.3 Theorem 3]{Feller2008introduction}, a random variable $X$ such that $\Phi_X$ is integrable admits a bounded (and continuous) probability density function. This concludes the proof of the second claim of the proposition.

We now adapt \cite[Proposition 1]{Rioul2011information} to the discrete case. For $k \in \Z$, define $p_n = \P ( \floor X = k)$ and $q_k = \frac{c}{1+ k^2}$ where $c = \sum_{k\in \Z}(1+k^2)^{-1}$ is such that $\sum_{k\in \Z} q_k = 1$. By Gibbs' inequality,
\begin{align}\label{eq:foo}
	0 &\leq \ent(\floor X) 
= \sum_{k \in \Z} p_k \log (1 / p_k ) 
\leq \sum_{k \in \Z} p_k \log (1 / q_k )
= \sum_{k \in \Z} p_k \prn[\big]{- \log c + \log ( 1 + k^2)} \nonumber\\
&\leq - \log c + \sum_{k \in \Z} p_k \log \prn[\big]{ ( 1 + |k| ) ^2 }
  = - \log c + 2  \E\bra[\big]{\log ( 1 + | \floor X |)}.
\end{align}
Finally, using $| \floor X | \leq |X| + 1 \leq 2 |X| + 1$, we upper bound the last term in \eqref{eq:foo}
\begin{displaymath}
	\E [ \log ( 1 + | \floor X |) ]  \leq \E [ \log ( 2 + 2| X |) ] )
	=  \log 2 +   \E[ \log (1+|X| ) ],
\end{displaymath}
which is finite  by assumption.
Therefore, $\ent(\floor X)  < \infty$, which concludes the proof.
\end{proof}

\begin{proof}[Proof of Proposition~\ref{prop:infiniteentropy}]
	The key argument is to construct a symmetric infinitely divisible and absolutely continuous random variable $X$ such that $p_X(x)  \geq  \frac{c}{|x| \log^2 |x|}$ for some constant $c>0$ and $|x|$ large enough and to consider the Lévy process $L$ whose law at time $t=1$ is the one of $X$. {We reproduce here for the sake of completeness the construction of $p_X$ proposed by Iosif Pinelis on Mathoverflow \cite{Pinelis2020existence}.}
	
	{Let $f(x) = \frac{\alpha \One_{x \geq e}}{x \log^2 x}$ where $0 < \alpha < \infty$ is such that $f$ is a pdf with $\int_{\R} f(x) \mathrm{d} x = 1$. For $n\geq 0$, we denote $f^{*n} = f * \cdots * f$ the $n$th fold convolution with the convention that $f^{*0} = \delta$. 
	We define the functions, which are easily shown to be pdf, 
	\begin{displaymath}
	    f_t(x)  = \mathrm{e}^{-t}  \sum_{n=0}^{\infty}
		\frac{t^n f^{*n}(x)}{n!} , \quad  g_t(x) = \frac{\mathrm{e}^{- \frac{x^2}{2t}} }{\sqrt{2\pi t}}, 
	\end{displaymath}
    and $p_t  = f_t * g_t$. Then, we observe that, for $s,t >0$, by expanding $(s+t)^n$,
    \begin{displaymath}
        f_{s+t} = \mathrm{e}^{-(s+t)}  \sum_{n=0}^{\infty}
	    \frac{(s+t)^n f^{*n}}{n!}
	    = \left( \mathrm{e}^{-s}  \sum_{n=0}^{\infty}
	    \frac{s^n f^{*n}}{n!}\right) * \left(  \mathrm{e}^{-t}  \sum_{n=0}^{\infty}
	    \frac{t^n f^{*n}}{n!} \right)= f_s * f_t
    \end{displaymath}
    We also have that $g_{s+t} = g_s * g_t $ and therefore $p_{s+t} = p_s * p_t $. This shows that
    there exists an infinitely divisible random variable $X$ whose pdf is $p_X = p_1$. Moreover, $p_1$ is absolutely continuous as the convolution of a Gaussian pdf. Then, we have for $x \geq 1 + \mathrm{e}$ that
    \begin{displaymath}
        p_X(x) 
     = (f_1 * g_1)(x) \geq \mathrm{e}^{-1} (f * g_1)(x) 
        \geq \mathrm{e}^{-1}\int_{0}^1 f(x-y) g_1(y) \mathrm{d} y \geq \mathrm{e}^{-1} f(x) \int_0^1 g_1(y) \mathrm{d} y,
    \end{displaymath}
where we used that $f_1 \geq \mathrm{e}^{-1}f$ in the first inequality and that $f$ is decreasing on $(\mathrm{e}, \infty)$ for the last one. Finally, the choice of $f$ implies that $p_X(x) \geq \frac{c}{x \log^2 x}$ for some constant $0< c < \infty$ and every $x \geq \mathrm{e} + 1$, as desired.}
	
  Recall that $p_k = \mathbb{P} ( \floor{X} =k) = \int_{k}^{k+1} p_X(x)\mathrm{d}x$.
 Moreover, for $k \geq 1$,  we have
 \begin{displaymath}
 \int_k^{k+1} \frac{\mathrm{d}x}{x\log^2 x} = \frac{1}{\log k} - \frac{1}{\log (k+1)}=      \frac{\log (1 + 1 /k)}{\log k \log (k+1)}   \underset{k\rightarrow \infty}{\sim} \frac{1}{k \log^2 k}
 \end{displaymath}
 Hence, using that $p_X(x) \geq c / (x \log^2 x)$ for $x$ large enough, we deduce that, for $k$ large enough,
  \begin{displaymath}
     p_k \geq \frac{c/2}{k\log^2 k}.
  \end{displaymath}
 Using that $x \mapsto - x \log x$ is increasing on $(0,1/\mathrm{e}]$, we therefore deduce that, for $k$ large enough (in particular such that $p_k \leq 1/\mathrm{e}$),
  \begin{displaymath}
     -  p_k \log(p_k) \geq   \frac{c/2}{k \log^2 k} \log\left( \frac{k\log^2 k}{c/2} \right) \underset{k\rightarrow \infty}{\sim} \frac{c/2}{  k \log k}.
  \end{displaymath}
  This shows that $\ent(\floor{L_1}) = -  \sum_{k \in \Z} p_k \log (p_k)  = \infty$ since $\sum_{k\geq 2} \frac{1}{k\log k} = \infty$. 
\end{proof}

\section{Deferred Proofs from Section~\ref{sec:betanonzero}}
\label{app:deferred-betanonzero}

\begin{proof}[Proof of Proposition~\ref{prop:stableandlocstable}]
    The characterization of self-similar Lévy processes is classic and proved for instance in~\cite[Proposition 13.5]{Sato1994levy}. 

    Assume that the characteristic exponent of $L$ satisfies \eqref{eq:psilimitforcarac}. Then, it is LSS according to \cite[Proposition 5.8]{fageot2019scaling} (applied to $d=1$). The local self-similarity order is then given by $1/\beta$ according to \cite[Theorem 4.6]{fageot2019scaling}. 
    For the converse, assume that $L$ is a  LSS Lévy process. First, this implies that $\beta > 0$ due to~\cite[Proposition 4.7]{fageot2019scaling}.
    Let $Y$ be the limit in law of {$b^H L_{\cdot / b}$ when $b\rightarrow \infty$}, which is symmetric by assumption. Denoting by $\overset{(\mathcal{L})} {\longrightarrow}$ the convergence in law,
    we have for all $a >0$ that
    \begin{displaymath}
     (ab)^H L_{\cdot / ab} \overset{(\mathcal{L})} {\underset{b\rightarrow \infty}{\longrightarrow}} Y   .  
    \end{displaymath} 
    Moreover, {$Y_{\cdot / a}$ is the limit in law of $b^H L_{\cdot / ab}$ when $b \rightarrow \infty$ and we therefore have that}
    \begin{displaymath}
 (ab)^H L_{\cdot / ab} = {a^H \left( b^H L_{\cdot / ab} \right)} \overset{(\mathcal{L})} {\underset{b\rightarrow \infty}{\longrightarrow}}
   a^H Y_{\cdot / a}   .
    \end{displaymath} 
    The unicity of the limit implies that, for all $a > 0$, $a^H Y_{\cdot / a}$ and $Y$ are equal in law, hence $Y$ is self-similar of order $H$.
    Moreover, $Y$ is a Lévy process: limiting arguments show that $Y_0 = 0$ a.s., and that its increments are stationary and independent.
    The first part of Proposition~\ref{prop:stableandlocstable} therefore implies that the symmetric and self-similar Lévy process $Y$ is S$\alpha$S with $\alpha = 1/ H$.
	Its characteristic exponent is therefore given by $\Psi_Y(\xi) =  - \gamma \abs \xi ^{1/H}$ for some $\gamma >0$ and all $\xi\in \R$. 
    Let $\Psi$ be the characteristic exponent of $L$. Then, the convergence in law of $a^H L_{\cdot / a}$ to $Y$ implies in particular the pointwise convergence of the characteristic functions of $a^H L_{1/a}$ to the one of $Y_1$ when $a\rightarrow \infty$. This implies that
    \begin{equation} \label{eq:limitminusgamma}
        \frac{1}{a} \Psi(a^H) = \log \mathbb{E} [\mathrm{e}^{\mathrm{i} a^H L_{1/a}} ]
        \underset{a\rightarrow \infty}{\longrightarrow} \log \mathbb{E} [\mathrm{e}^{\mathrm{i} Y_1} ] = \Psi_Y(1)  = -\gamma.
    \end{equation}
    Setting $a^H = \xi$ in \eqref{eq:limitminusgamma}, we deduce that $\Psi(\xi)  {\sim} -\gamma \xi^{1/H}$  when $\xi \rightarrow   \infty$.
	We show similarly that $\Psi(\xi)  {\sim} -\gamma \abs\xi^{1/H}$ when $\xi \rightarrow - \infty$.
    Finally, using again \cite[Theorem 4.6]{fageot2019scaling}, we know that $1/H = \beta$ is the Blumenthal--Getoor index of $L$ and Proposition~\ref{prop:stableandlocstable} is proved.
    \end{proof}

	\begin{proof}[Proof of \cref{lemma:controlmoments}]
	Define $r=\min\set{1, p,\beta}$. Since $\E[|L_1|^p]<\infty$ by assumption, we also have $\E[|L_1|^r]<\infty$. By \cite[Thm. 25.3]{Sato1994levy} applied to $g(t) = \max (|t|^r, 1)$, this is equivalent to $\int_{\abs t\geq 1} \abs t^r \nu(\rd t)<\infty$.
	 We can therefore apply \cite[Prop. 2.4]{FageotThese} with $r \leq 1$ and deduce that $\abs{\Psi(\xi)}\leq C_1 \abs\xi^r$ for some $C_1 > 0$ and any $\abs\xi<1$.
	Moreover, $\abs{\Psi(\xi)} \leq C_2 \abs\xi^\beta$ for some $C_2 > 0$ and any $\abs\xi\geq  1$ by assumption. 
    Then,  for all $\xi\in\R$, we have that 
	\begin{align} \label{eq:atzerpsi}
 \abs{\Psi(\xi)} \leq C\prn[\big]{\abs\xi^\beta + \abs\xi^r}
 \end{align}
 with $C = \max(C_1,C_2)$.
 
 For all $q \in (0,2)$ and random variable $X$ such that $\E[|X|^q] < \infty$, we have the classic\footnote{This condition appears apparently for the first time in the study of fractional absolute moments of random variables by Pao-Lu Hsu in~\cite{hsu1951absolute}.} expression of $\E[|X|^q]$ in terms of the characteristic function $\Phi_X$ (see \emph{e.g.}, ~\cite[Thm 1.5.9]{ushakov2011selected}):
\begin{equation} \label{eq:moments1}
    \E[|X|^q] = c_q \int_{\R} \frac{1 -  \Re \Phi_X(\xi)}{|\xi|^{q+1}} \rd\xi,
\end{equation}
where $c_q>0$ is a constant depending only on $q$. Moreover, if $X$ is infinitely divisible with characteristic exponent $\Psi$, there exists a constant $M>0$ such that for all $\xi\in\R$,
\begin{align}\label{eq:bound-char}
    1 - \Re \Phi_X(\xi) & = \Re ( 1 - \Phi_X(\xi) ) \leq |1 - \Phi_X(\xi) | \nonumber \\ &
    \leq M (1 - \exp( - |\Psi(\xi)|),
\end{align}
where the last inequality uses that {the function $f:z\mapsto |1-e^z|/(1-e^{-|z|})$ is bounded by some constant $M>0$ for all $z\in\mathbb{C}$ with $\Re z\leq 0$. Indeed, when $|z|\leq 1$ we use the inequality $|1-e^z|\leq e^{|z|} - 1$ (easily proved using the power series expansion of $e^z$ and the triangle inequality) which implies $f(z)\leq \frac{e^{|z|}-1}{1-e^{-|z|}}=e^{|z|}\leq e$. When $|z|\geq 1$, we use $|1-e^z|\leq 1 + |e^z| = 1+e^{\Re z}\leq 2$ for $z$ with $\Re z\leq 0$, which implies $f(z)\leq \frac{2}{1-e^{-|z|}}\leq \frac{2e}{e-1}$.}

{The bound \eqref{eq:bound-char} together with \eqref{eq:moments1}} implies
\begin{equation} \label{eq:moments}
    \E[|X|^q] \leq c_q M  \int_{\R} \frac{1 -  \exp(-|\Psi(\xi)|)}{|\xi|^{q+1}} \rd\xi.
\end{equation}

For $X=t^{-1/\beta} L_t$ whose characteristic exponent is $\xi \mapsto  t \Psi( t^{-1/\beta} \xi)$ we have by \eqref{eq:atzerpsi}
\begin{displaymath}
	t\Psi(t^{-1/\beta}\xi)\leq C\big(\abs\xi^\beta+ t^{1-r/\beta}\abs\xi^r\big)
	\leq C\big(\abs\xi^\beta + \abs\xi^r\big)
\end{displaymath}
for all $0<t\leq 1$, where the second inequality uses that $t^{1-r/\beta}\leq 1$ since $r\leq\beta$. Using that $y\mapsto 1-\exp(-y)$ is increasing, we get from \eqref{eq:moments} that for all $0<t\leq 1$,
\begin{equation}\label{eq:finalestimate}
\E[|X|^q] \leq c_q M  \int_{\R} \frac{1 -  \e^{-C(\abs\xi^\beta + \abs\xi^r)}}{|\xi|^{q+1}} \rd\xi.
\end{equation}
Finally, for $0<q<r$, the integral in \eqref{eq:finalestimate} is finite since the integrand is equivalent to $1/\abs\xi^{q+1}$ when $\abs\xi\to\infty$, and to $C\abs\xi^{r-q-1}$ with $r > q$, when $\xi\to 0$. Thus, $\E[|X|^q]$ is upper-bounded by a constant independent of $t$ that we denote by $M_q$. This implies \eqref{eq:boundMpforLt}.
\end{proof}

\section{Deferred Proof from Section~\ref{sec:examples}}
\label{app:deferred-examples}

  \begin{proof}[Proof of Proposition~\ref{prop:temperedstable}]
	{The Blumenthal--Getoor index $\beta$ of $L$ is characterized in~\cite[Lemmas 3.22 and 3.23]{grabchak2016tempered} and $\beta = \alpha$.
    We study the asymptotic behavior of the characteristic exponent $\Psi$. Tempered stable processes being symmetric, $\Psi$ is  given by \eqref{eq:caractexposimple}. As we did for layered stable processes, we  use  the decomposition  $\Psi = \Psi_0 + \Psi_\infty$  with
    \begin{displaymath}
    \Psi_0(\xi) = \int_{|t|\le 1}  (\cos ( t \xi ) - 1 )  \nu( \mathrm{d} t ) 
	\quad\text{and}\quad
    \Psi_\infty(\xi) = \int_{|t|> 1}  (\cos ( t \xi ) - 1 )  \nu( \mathrm{d} t ).
    \end{displaymath}
    We then observe that, by a simple change of variable and using \eqref{eq:nutempered}, 
    \begin{equation} \label{eq:psi0new}
     \Psi_0(\xi) = - 
	 \left( \int_{\R} (1 -\cos u) \frac{q(  |u / \xi| ^p )}{\abs u^{\alpha+1}}  \One_{\abs u \leq \abs \xi} \mathrm{d}u \right) \abs \xi^{\alpha} 
	 := - \left( \int_{\R} h_\xi (u) \mathrm{d} u \right) \abs \xi^{\alpha} . 
    \end{equation}
Then, we have that $h_{\xi}(u) \rightarrow \frac{c(1-\cos u)}{|u|^{\alpha + 1}}$ when $|\xi|\rightarrow \infty$ for any $u \in \R$ where $c > 0$ is given by \eqref{eq:tructruc}. Moreover, the function $q$ being continuous, it is in particular bounded over $[0,1]$, let say by $M > 0$. Then, we have that  
$$h_\xi (u) \leq q\left( \left| \frac{u} {\xi} \right|^p \right) \One_{|u| \leq |\xi|} \frac{ 1 - \cos u }{ |u|^{\alpha + 1}}  \leq  M \frac{ 1 - \cos u }{ |u|^{\alpha + 1}},
$$
which is integrable over $\R$. The Lebesgue dominated convergence theorem therefore applies and $\int_{\R} h_\xi (u) \mathrm{d} u$ converges to $\gamma = \int_{\R} \frac{c(1-\cos u)}{|u|^{\alpha + 1}} \mathrm{d} u > 0$. Hence, \eqref{eq:psi0new} implies that $\Psi_0(\xi) \sim_{|\xi|\rightarrow \infty}  - \gamma |\xi|^{\alpha}$. }

{ Moreover, the function $q$ is continuous and vanishes at infinity. It is therefore bounded and we deduce that
    \begin{displaymath}
		\lvert \Psi_\infty(\xi) \rvert =  \int_{|t|> 1}  \big(1 - \cos (\xi t)\big) \frac{q(|t|^p)}{|t|^{\alpha + 1}} \mathrm{d} t 
		 \leq 2 \norm q _\infty \int_{|t|> 1}  \frac{\mathrm{d} t}{|t|^{\alpha + 1}}, 
    \end{displaymath}
	which is a constant independent from $\xi$. }
	
    {We have therefore shown that $\Psi(\xi) \sim_{|\xi|\rightarrow \infty}  - \gamma |\xi|^{\alpha}$, hence $L$ is LSS with of order $H = 1 / \alpha$ by \cref{prop:stableandlocstable}. Moreover, for $r < \alpha$, we have that 
	\begin{displaymath}
	    \int_{|t| > 1} |t|^r \nu(\mathrm{d} t) 
	     = \int_{|t|> 1} \frac{q(|t|^p)}{|t|^{1 + (\alpha - r)}} \mathrm{d}t 
	     \leq 
		\norm q _\infty \int_{|t|> 1} \frac{\mathrm{d} t}{|t|^{1 + (\alpha - r)}} < \infty.
	\end{displaymath}
	This implies that $\mathbb{E} [|L_1|^r] < \infty$ for some $r > 0$ and the conditions of Theorem~\ref{theo:maintheolocallyselfsim} apply from which we obtain \eqref{eq:entropy-layered}.}
\end{proof}

\end{document}